\DeclareSymbolFont{cyrletters}{OT2}{wncyr}{m}{n}
\DeclareMathSymbol{\sha}{\mathalpha}{cyrletters}{"58}
 \newtheorem{thm}{Théorème}[section]
 \newtheorem*{thmp}{Théorème principal}
 \newtheorem{lem}[thm]{Lemme}
 \newtheorem{prop}[thm]{Proposition}
 \theoremstyle{definition}
 \theoremstyle{remark}
 \theoremstyle{remark}
 \newtheorem{rem}[thm]{\textbf{Remarque}}
 \numberwithin{equation}{subsection}
 \newcommand{\To}{\longrightarrow}
 \newcommand{\A}{\mathbb{A}}
 \newcommand{\Q}{\mathbb{Q}}
 \newcommand{\Z}{\mathbb{Z}}
\newcommand{\chapeau}{{\rlap{\smash{\hbox{\lower4pt\hbox{\hskip1pt$\widehat{\phantom{u}}$}}}}}\mbox{ }}
\begin{document}

\title{Astuce de Salberger et zéro-cycles sur certaines fibrations}

\author{ Yongqi LIANG  }

\address{Yongqi LIANG \newline
Département de Mathématiques, \newline Bâtiment 425,\newline
Université  Paris-sud 11,\newline  F-91405 Orsay,\newline
 France}

\email{yongqi.liang@math.u-psud.fr}

\thanks{\textit{Mots clés} : zéro-cycle de degré $1$, principe de Hasse, approximation faible,
obstruction de Brauer-Manin}

\thanks{\textit{Classification AMS} : 14G25 (11G35, 14D10)}

\date{\today.}




\maketitle
\small
\paragraph*{\textsc{Résumé}}
On démontre que l'obstruction de Brauer-Manin est la seule au
principe de Hasse et à l'approximation faible pour les
zéro-cycles sur certaines fibrations au-dessus
d'une courbe lisse ou au-dessus de l'espace projectif.
L'exactitude d'une suite de type global-local pour les groupes de Chow des zéro-cycles
est aussi établie pour ces variétés.
La nouveauté principale est que les
hypothèses arithmétiques sont supposées seulement sur les
fibres au-dessus d'un sous-ensemble hilbertien généralisé, de plus, on permet l'existence
des fibres géométriquement non intègres.

\medskip

\paragraph*{\textsc{Abstract}}
We prove that the Brauer-Manin obstruction is the only obstruction to the
Hasse principle and to weak approximation for zero-cycles on certain
fibrations over a smooth curve or over the projective space.
The exactness of a sequence of global-local type for Chow groups of zero-cycles is also
established for these varieties.
The principal
novelty is that the arithmetic hypotheses are supposed only on the fibers
over a generalized Hilbertian subset, moreover, we permit the existence of geometrically
non integral fibers.
\normalsize

\tableofcontents

\section*{Introduction}
\label{intro}

Soit $X$ une variété projective lisse et géométriquement intègre sur un
corps de nombre $k.$
On considère le principe de Hasse
pour les zéro-cycles de degré $1$ sur $X.$ On considère également, en un certain sens (précisé dans \S \ref{rappels}),
l'approximation faible/forte pour les zéro-cycles de degré $1$ sur $X.$
L'obstruction associée au groupe de Brauer $Br(X),$ dite de Brauer-Manin, est introduite par Manin dans son exposé \cite{Manin}
pour les points rationnels sur $X,$ et est étendue aux zéro-cycles par Colliot-Thélène dans \cite{CT95}.
Il a été conjecturé par Colliot-Thélène/Sansuc \cite{CTSansuc81}, Kato/Saito \cite{KatoSaito86},
et Colliot-Thélène \cite{CT95},
que la suite suivante (\textit{cf.} \S \ref{suiteE}) soit exacte
$$(E)\mbox{         }CH_0^{\chapeau}(X)\to CH_{0,\A}^\chapeau(X)\to Hom(Br(X),\Q/\Z),$$
qui signifie qu'une famille de zéro-cycles locaux orthogonale au groupe de Brauer $Br(X)$ de $X$
provient (modulo un entier donné) d'un zéro-cycle global.

Supposons toujours que $X$ admet un morphisme dominant $\pi:X\to B$ à fibre générique $X_\eta$ géométriquement intègre
sur le corps des fonctions $k(B),$ où $B$ est une variété projective lisse et géométriquement intègre.
Soit $H$ un sous-ensemble (à préciser ci-dessous) de points fermés de $B.$
On fait les hypothèses suivantes:

(HP/AF) le principe de Hasse/l'approximation faible (pour les points rationnels ou pour les zéro-cycles de degré $1$)
vaut pour la $k(\theta)$-variété $X_\theta$ pour tout point
fermé $\theta\in H\subset B;$

(\textsc{Abélienne-Scindée}) pour tout point $\theta$ de $B$ de codimension $1,$ la fibre $X_\theta$
possède une composante irréductible de multiplicité un,
dans le corps des fonctions de laquelle la fermeture algébrique de $k(\theta)$ est une extension
abélienne de $k(\theta).$

L'hypothèse (\textsc{Abélienne-Scindée}),
introduite par Colliot-Thélène/Skorobogatov/ Swinnerton-Dyer
\cite{CT-Sk-SD}, est automatiquement vérifiée si toutes les
fibres de $\pi$ sont géométriquement intègres.

En utilisant l'astuce de Salberger (\cite{Salberger}, \S 6), dans leur article \cite{CT-Sk-SD},
Colliot-Thélène/ Skorobogatov/Swinnerton-Dyer montrent que
l'obstruction de Brauer-Manin est la seule au principe de Hasse pour les zéro-cycles de degré $1$
sur $X$ si $B=\mathbb{P}^1$ et si $H$ est un ouvert dense de $B.$  Ce résultat
a été généralisé (au moins partiellement) dans deux directions différentes:
en généralisant la base $B$ et en affaiblissant l'hypothèse sur le sous-ensemble $H.$

\noindent- Initié par Colliot-Thélène \cite{CT99}, suivi par les travaux de Frossard \cite{Frossard} et de
van Hamel \cite{vanHamel}, on arrive au résultat récent de Wittenberg \cite{Wittenberg},
il montre la même assertion pour le cas où $B=C$ est une courbe lisse de genre quelconque en supposant
la finitude du groupe de Tate-Shafarevich $\sha(Jac(C))$ de sa jacobienne, avec $H$ un ouvert dense de $C;$
de plus, il montre l'exactitude de $(E)$ lorsque
$X_\eta$ est géométriquement rationnellement connexe.
Un énoncé similaire pour le cas où $B=\mathbb{P}^n$ avec $H$ un ouvert dense
est montré par l'auteur dans \cite{Liang2}, Théorème 3.5.

\noindent- Dans l'autre direction, afin d'appliquer le résultat aux solides de Poonen construits dans
\cite{Poonen}, l'auteur montre dans \cite{Liang1}
que l'obstruction de Brauer-Manin est la seule au principe de Hasse et à l'approximation faible pour
les zéro-cycles de degré $1$ sur $X,$ si l'on suppose que toutes les fibres de $\pi$ sont
géométriquement intègres,
si $B=C$ est une courbe lisse de groupe $\sha(Jac(C))$ fini, et si $H$
est un \textit{sous-ensemble hilbertien généralisé} (\textit{cf.} \S \ref{rappels}, en particulier
un ouvert dense est un tel sous-ensemble)
de $C.$

Le but de ce travail est de montrer le théorème principal suivant qui généralise simultanément
les résultats des deux types ci-dessus.

\begin{thmp}
Soit $\pi:X\to B$ un morphisme dominant entre des variétés projectives lisses et géométriquement intègres,
à fibre générique $X_\eta$ géométriquement intègre sur $k(B).$ On suppose
\begin{enumerate}
\setlength\itemindent{2.2cm}\item[(\textsc{Abélienne-Scindée})]
\setlength\itemindent{0cm}\item[ ]
pour tout point $\theta$ de $B$ de codimension $1,$
la fibre $X_\theta$ possède une composante irréductible de multiplicité un,
dans le corps des fonctions de laquelle la fermeture algébrique de $k(\theta)$ est une extension
abélienne de $k(\theta).$
\end{enumerate}

Soit $\textsf{Hil}$ un sous-ensemble hilbertien généralisé de $B.$
Supposons respectivement que

(1) pour tout point fermé $\theta\in \textsf{Hil},$
la fibre $X_\theta$ satisfait le principe de Hasse (pour les points rationnels ou pour les zéro-cycles de degré $1$);

(2) pour tout point fermé $\theta\in \textsf{Hil},$
la fibre $X_\theta$ satisfait l'approximation faible (pour les points rationnels ou pour les zéro-cycles de degré $1$);

(3) l'hypothèse (2) et de plus la fibre générique $X_\eta$ est géométriquement rationnellement connexe.

Alors, dans chacun des cas suivants

- la base $B=C$ est une courbe de groupe de Tate-Shafarevich $\sha(Jac(C))$ fini,

- la base $B=\mathbb{P}^n$ est l'espace projectif,

\noindent on a pour les zéro-cycles de degré $1$ sur $X$

(1) l'obstruction de Brauer-Manin est la seule au principe de Hasse;

(2) l'obstruction de Brauer-Manin est la seule à l'approximation faible;

(3) l'obstruction de Brauer-Manin est la seule à l'approximation forte, et de plus, la suite $(E)$
est exacte.
\end{thmp}

En comparant avec les résultats \cite[Théorèmes 1.3, 1.4]{Wittenberg} et \cite[Théorème 3.1]{Liang1},
dans ce travail l'hypothèse arithmétique est supposée au-dessus d'un sous-ensemble hilbertien généralisé
au lieu d'un ouvert dense,
plus de fibres qui ne satisfont pas le principe de Hasse ou l'approximation faible sont permise, d'où
on trouve une nouvelle application dans \S \ref{Fibreenchatelet}.
Ces deux anciens résultats sont basés sur le résultat de
Colliot-Thélène/Skorobogatov/Swinnerton-Dyer \cite[Théorème 4.1]{CT-Sk-SD}, là l'astuce de Salberger
a été utilisée pour sa démonstration. Dans ce article, on développe cette astuce afin de démontrer
le théorème ci-dessus pour le cas crucial où $B=\mathbb{P}^1.$

On montre d'abord dans \S \ref{Casparticulier1} le cas particulier
du théorème, hormis l'exactitude de $(E),$ où $B=\mathbb{P}^1$ sous l'hypothèse plus forte que (\textsc{Abélienne-Scindée}):

- toutes les fibres de $\pi$ sont géométriquement intègres.

\noindent Ensuite, dans \S \ref{Casparticulier2}, on adapte cette preuve à l'astuce de Salberger et on montre
le théorème pour le cas où $B=\mathbb{P}^1$ sous l'hypothèse (\textsc{Abélienne-Scindée}).
C'est une généralisation de \cite[Théorème 4.1]{CT-Sk-SD}.
À partir de ceci, en appliquant les méthodes de Wittenberg \cite{Wittenberg,WittenbergLNM}
(voir aussi \cite{Liang2}), on traite
le cas où $B=C$ est une courbe dans \S \ref{Cas1} \S \ref{suiteE-C} et le cas où $B=\mathbb{P}^n$ dans \S \ref{Cas2}
\S \ref{suiteE-P}.
Enfin, on discute l'application aux fibrations en surfaces de Châtelet dans \S \ref{Fibreenchatelet}.




\section{Conventions et rappels}\label{rappels}
\indent\textbf{Conventions.}
Dans tout ce travail, une \emph{variété} désigne un schéma séparé de type fini sur un corps.
Une \textit{fibration} $\pi:X\to B$ signifie un morphisme dominant entre des variétés lisses et géométriquement
intègres à fibre générique géométriquement intègre.
Le corps de base $k$ est toujours un corps de nombres. Comme d'habitude, on note $\Omega_k$
(resp. $\Omega_k^\textmd{f},$ $\Omega_k^\infty$) l'ensemble
des places (resp. places finies, places archimédiennes) de $k.$ Pour chaque place $v\in\Omega_k,$
on note $k_v$ le corps local associé, et $k(v)$ le corps résiduel si $v$ est non-archimédienne.
On fixe une clôture algébrique $\bar{k}$ de $k,$ $\bar{k}_v$ de $k_v$ pour toute
$v\in\Omega_k.$ L'expression \og presque tout\fg\mbox{} signifie toujours
\og tout à l'exception d'un nombre fini\fg.
Soit $k'$ une extension finie de $k,$ pour un sous-ensemble $S$ de places de $k,$
on note $S\otimes_kk'$ l'ensemble des places de $k'$ au-dessus des places dans $S.$

\smallskip
\textbf{Accouplement de Brauer-Manin.}
Soit $X$ une variété projective lisse géométriquement intègre sur un corps $k,$ le composé de la
restriction et l'application d'évaluation définit un accouplement
\begin{equation*}
    \begin{array}{rcccl}
     \langle\cdot,\cdot\rangle_k:   Z_0(X) & \times & Br(X) & \to & Br(k),\\
          (\mbox{ }\sum_Pn_PP &,& b\mbox{ }) & \mapsto & \sum_Pn_Pcores_{k(P)/k}(b(P)),\\
    \end{array}
\end{equation*}
qui se factorise à travers l'équivalence rationnelle $\sim,$ où $Br(\cdot)=H^2_{\mbox{\scriptsize\'et}}(\cdot,\mathbb{G}_m)$
est le groupe de Brauer cohomologique.
Lorsque $k$ est un corps de nombres, on définit
l'\emph{accouplement de Brauer-Manin} pour les zéro-cycles:
\begin{equation*}
    \begin{array}{rcccl}
     \langle\cdot,\cdot \rangle _k:\prod_{v\in\Omega_k}Z_0(X_v) & \times & Br(X) & \to & \mathbb{Q}/\mathbb{Z},\\
         (\mbox{ }\{z_v\}_{v\in\Omega_k} & , & b\mbox{ }) & \mapsto & \sum_{v\in\Omega_k}inv_v(\langle z_v,b \rangle _{k_v}),\\
    \end{array}
\end{equation*}
où $inv_v:Br(k_v)\hookrightarrow\Q/\Z$ est l'invariant local en $v$ et où $X_v=X\times_kk_v.$

\smallskip
\textbf{Principe de Hasse et l'approximation faible/forte.}
On considère le principe local-global pour les zéro-cycles de degré $1.$ On dit que $X$ \emph{satisfait le principe
de Hasse} (resp. \emph{l'obstruction de Brauer-Manin est la seule au principe de Hasse pour $X$})
s'il existe un zéro-cycle global de degré
$1$ lorsqu'il existe une famille de zéro-cycles locaux de degré $1$ (resp. une famille de zéro-cycles locaux de degré $1$
orthogonale à $Br(X)$).
On dit que $X$ satisfait \emph{l'approximation faible (resp. forte) au niveau du groupe de Chow},
si pour tout entier strictement positif $m$, pour tout ensemble fini $S\subset\Omega_k$ (resp. pour
$S=\Omega_k$), et pour toute famille $\{z_v\}_{v\in\Omega_k}$ de zéro-cycles locaux de degré $1,$
il existe un zéro-cycle global $z=z_{m,S}$ de degré $1$ tel que $z$ et $z_v$ aient la même image dans $CH_0(X_v)/m$
pour toute $v\in S.$ On dit que \emph{l'obstruction de Brauer-Manin est la seule à l'approximation faible/forte
au niveau du groupe de Chow}, si l'on demande de plus $\{z_v\}_{v\in\Omega_k}\bot Br(X)$ dans la définition
précédente. Pour simplifier la terminologie, pour les zéro-cycles on dit simplement
\og \emph{l'approximation faible/forte}\fg\mbox{ } au lieu de \og l'approximation faible/forte
au niveau du groupe de Chow\fg.

\smallskip
\textbf{Sous-ensemble hilbertien généralisé.}
On rappelle la notion de \emph{sous-ensemble hilbertien généralisé}.
Soit $X$ une variété sur un corps $k,$ un sous-ensemble $\textsf{Hil}\subset X$
de points fermés est un \emph{sous-ensemble hilbertien généralisé} s'il existe un morphisme
étale fini $Z\buildrel\rho\over\To U\subset X$ avec $U$ un ouvert non-vide de $X$ et $Z$ intègre tel que
$\textsf{Hil}$ soit l'ensemble des points fermés $\theta$ de $U$ pour lesquels $\rho^{-1}(\theta)$ est connexe.
Si $X$ est une variété normale,
soit $\textsf{Hil}_i$ ($i=1,2$) un sous-ensemble hilbertien généralisé, on
peut trouver un sous-ensemble hilbertien généralisé $\textsf{Hil}\subset\textsf{Hil}_1\cap \textsf{Hil}_2,$
\textit{cf.} \cite{Liang1}, \S 1.2.

\smallskip
\textbf{Zéro-cycles.}
Soit $z=\sum n_iP_i$ un zéro-cycle de $X$ (avec les points fermés $P_i$ distincts).
On dit qu'il est \emph{séparable} si $n_i\in\{0,1,-1\}$ pour tout $i.$

Soit $\pi:X\to B$  un morphisme dominant, le zéro-cycle $z=\sum n_iP_i$
est dit \emph{déployé} (relativement à la fibration $\pi:X\to B$)
s'il existe un $k(P_i)$-point rationnel sur la fibre $X_{P_i}$
pour tout $i.$

Étant donné $P$ un point fermé de $X_v,$ on fixe un $k_v$-plongement $k_v(P)\To \bar{k}_v,$
$P$ est vu comme un point $k_v(P)$-rationnel de $X_v.$
On dit qu'un point fermé $Q$ de $X_v$ est \emph{suffisamment proche}  de $P$ (par rapport à un voisinage $U_P$ de $P$ dans
l'espace topologique $X_v(k_v(P))$), si $Q$ a corps résiduel $k_v(Q)=k_v(P)$ et si l'on peut choisir un
$k_v$-plongement $k_v(Q)\To \bar{k}_v$ tel que $Q,$ vu comme un $k_v(Q)$-point rationnel de $X_v,$ soit contenu dans $U_P.$
En étendant $\mathbb{Z}$-linéairement, cela a un sens de dire que $z'_v\in Z_0(X_v)$ est suffisamment proche de $z_v\in Z_0(X_v)$
(par rapport à
un système de voisinages des points qui apparaissent dans le support de $z_v$), en particulier
$deg(z'_v)=deg(z_v)$ si c'est le cas.
Wittenberg a montré le lemme suivant (en remarquant que si $z_v$ et $z'_v$
sont effectifs de degré $d$ et suffisamment proches,
ils définissent des $k_v$-points sur le produit symétrique $Sym^d(X),$ suffisamment proches
par rapport à la $k_v$-topologie).

\begin{lem}[Wittenberg \cite{Wittenberg}, Lemme 1.8]\label{lemWittenberg}
Soient $m$ un entier strictement positif et $X$ une variété lisse sur $k.$
Pour $v\in\Omega_k,$ soient $z_v$ et $z'_v$ des zéro-cycles de $X_v.$
Alors ils ont la même image dans $CH_0(X_v)/m$ lorsqu'ils sont suffisamment proches.
\end{lem}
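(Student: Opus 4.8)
The plan is to follow the parenthetical indication: interpret effective zero-cycles as points of a symmetric product and transport the question to the $v$-adic topology. First I would reduce to the case of a single closed point. Writing $z_v=\sum_i n_iP_i$ with the $P_i$ distinct, proximity means $z'_v=\sum_i n_iQ_i$ where each $Q_i$ is sufficiently proche de $P_i$; by the very definition this forces $k_v(Q_i)=k_v(P_i)=:L_i$, so $P_i$ and $Q_i$ have the same degree $d_i=[L_i:k_v]$. Since the class map $Z_0(X_v)\to CH_0(X_v)/m$ is additive, it suffices to prove $[P_i]=[Q_i]$ in $CH_0(X_v)/m$ for each $i$. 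Thus I may assume $z_v=P$ and $z'_v=Q$ are single closed points of one and the same degree $d$, with $Q$ sufficiently proche de $P$.

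An effective cycle of degree $d$ is the same datum as a $k_v$-point of $Sym^d(X_v)$, and, as indicated in the remark preceding the statement, two sufficiently close effective cycles of degree $d$ give two $k_v$-points $x,x'$ of $Sym^d(X_v)$ that are close for the $v$-adic topology. It therefore suffices to prove that the map $Sym^d(X_v)(k_v)\to CH_0(X_v)/m$ sending an effective cycle to its class is locally constant at $x$; this yields $[P]=[Q]$ as soon as $Q$ is taken close enough to $P$.

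To prove this local constancy I would cut down to a curve. Fixing a projective embedding, a Bertini argument provides a smooth geometrically connected projective curve $C\subset X_v$, a general complete intersection of hyperplane sections passing through the finitely many closed points occurring in the supports of $P$ and $Q$. Since the inclusion $C(L)\hookrightarrow X_v(L)$ is a topological embedding for every finite extension $L/k_v$, and in particular for $L=k_v(P)$, proximity of $Q$ to $P$ inside $X_v$ is equivalent to proximity inside $C$; hence $P$ and $Q$ are sufficiently close effective cycles of degree $d$ on $C$, and by functoriality of the class map it is enough to establish $[P]=[Q]$ in $CH_0(C)/m$ before pushing forward along $CH_0(C)/m\to CH_0(X_v)/m$.

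On the curve the question is governed by the Jacobian. The morphism $Sym^d(C)\to Pic^d(C)$ carries $P$ and $Q$ to two $v$-adically close points, so $[P]-[Q]$ lies close to $0$ in $Pic^0(C)(k_v)=Jac(C)(k_v)$. As $k_v$ has characteristic zero, multiplication by $m$ on the abelian variety $Jac(C)$ is �tale, hence an open map on $k_v$-points by the $v$-adic inverse function theorem; therefore $mJac(C)(k_v)$ is open and the quotient $Jac(C)(k_v)/mJac(C)(k_v)$ is discrete. Consequently $[P]-[Q]$ already belongs to $mJac(C)(k_v)\subset mCH_0(C)$ once $Q$ is close enough, which is precisely $[P]=[Q]$ in $CH_0(C)/m$. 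I expect the main obstacle to be the reduction-to-a-curve step: one must check that the Bertini curve can be chosen smooth and irreducible through all the prescribed closed points while genuinely preserving the $v$-adic proximity, and that the pushforward respects the classes mod $m$; granting this, the Jacobian argument is then formal.
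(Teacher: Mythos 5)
You should first note that the paper itself contains no proof of this lemma: it is imported from Wittenberg (Lemme 1.8 of \cite{Wittenberg}), with only the parenthetical hint that close effective cycles of degree $d$ define close $k_v$-points of $Sym^d(X)$. So your proposal stands on its own, and it has a genuine structural gap, located precisely at the step you call ``formal''. The lemma asserts the existence of a system of neighborhoods, depending only on $z_v$ and $m$, such that \emph{every} $z'_v$ lying in it has the same class modulo $m$. In your argument the auxiliary curve $C$ is chosen through the supports of both $P$ and $Q$, hence depends on $Q$; and the proximity threshold that guarantees $[P]-[Q]\in mJac(C)(k_v)$ is the size of the open subgroup $mJac(C)(k_v)$, which depends on $C$, hence again on $Q$. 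Thus ``once $Q$ is close enough'' imposes on $Q$ a condition that was manufactured from $Q$ itself, and nothing produces one neighborhood of $P$ valid for all $Q$ (if the threshold may depend on $Q$, the statement becomes vacuous: one can always shrink it until the hypothesis fails). Closing this circle is the real content of the lemma: one must either obtain uniformity of the subgroups $mJac(C_t)(k_v)$ over the (bounded, but non-compact) family of admissible curves, or restructure the proof so that the neighborhood is constructed before $Q$ is given; this is a missing idea, not a routine verification, and it is why you have misidentified the Bertini step as the main obstacle.

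Two further points would also need repair. First, the inclusion $mJac(C)(k_v)\subset mCH_0(C)$ you invoke is unjustified, and false in general: $CH_0(C)=Pic(C)$ sits inside $Pic_{C/k_v}(k_v)$ with cokernel embedding into $Br(k_v)$ when $C(k_v)=\emptyset$, and a divisor class that is $m$-divisible in $Jac(C)(k_v)=Pic^0_{C/k_v}(k_v)$ need not be $m$-divisible in $Pic^0(C)$ (genus-one curves of nontrivial period over $k_v$ already give counterexamples, by local duality). Since you have arranged $k_v(P)=k_v(Q)=L$, this particular defect is fixable: base change to $L$, where $\tilde P,\tilde Q$ are rational points so the obstruction vanishes, conclude $[\tilde P]-[\tilde Q]\in mCH_0(C_L)$ there, and push forward by the norm $CH_0(C_L)\to CH_0(C)$, which carries $[\tilde P],[\tilde Q]$ back to $[P],[Q]$; but this must be said, and it does not cure the circularity above. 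Second, the statement is for an arbitrary smooth variety, while your construction of a \emph{projective} complete-intersection curve inside $X_v$ uses that $X_v$ is projective; in characteristic zero one can reduce to that case by embedding $X_v$ as an open of a smooth projective $\overline{X}_v$ and restricting classes along the open immersion (flat pullback kills classes of cycles supported on the boundary), or one can work with curves merely proper over $X_v$ (normalizations of closures), but as written this hypothesis is assumed without justification.
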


\textbf{Groupe de Brauer vertical.}
Soit $\pi:X\to B$ un morphisme dominant entre des variétés lisses connexes.
Le groupe de Brauer $Br(X)$ est vu comme un sous-groupe de $Br(k(X)).$ La partie
verticale $Br_{vert}(X)\subset Br(X)$ consiste en les éléments de $Br(X)$ provenant
de $Br(k(B))$ par le morphisme $\pi^*:Br(k(B))\to Br(k(X))$ induit par $\pi.$
Le quotient $Br_{vert}(X)/\pi^*Br(B)$ est fini si $B$ est une courbe et si l'hypothèse
(\textsc{Abélienne-Scindée}) dans l'introduction est vérifiée, \textit{cf.} \cite{CT-Sk2000}, Lemme 3.1.

\section{Cas particulier où les fibres sont géométriquement intègres et $B=\mathbb{P}^1$}\label{Casparticulier1}

Puisque la preuve complète du théorème principal avec $B=\mathbb{P}^1$ est longue, pour le confort du lecteur,
on la sépare en deux parties. La première partie \S \ref{Casparticulier1}
se concentre sur les problèmes que comment assurer que $\theta$ est un point fermé au lieu
d'un zéro-cycle effectif et comment contrôler $\theta$ tel que il soit contenu dans $\textsf{Hil}.$
La deuxième partie \S \ref{Casparticulier2} se concentre sur le traitement de (\textsc{Abélienne-Scindée}), \textit{i.e.}
l'astuce de Salberger, et sur comment l'adapter avec la première partie.
La deuxième partie peut être vue comme une interprétation géométrique de
l'astuce de Salberger (comparer avec la preuve du théorème 3.1 de \cite{CT-Sk-SD}).

\bigskip

Dans cette section, on montre un cas particulier du théorème principal (sauf l'exactitude de la suite $(E)$) où
$B=\mathbb{P}^1$ et on suppose que toutes les fibres sont géométriquement intègres au lieu
de l'hypothèse (\textsc{Abélienne-Scindée}),
les conclusions qu'on va montrer deviennent beaucoup plus simples, respectivement:

(1) le principe de Hasse pour les zéro-cycles de degré $1;$

(2) l'approximation faible pour les zéro-cycles de degré $1;$

(3) l'approximation forte pour les zéro-cycles de degré $1.$

\noindent Ce cas est aussi un cas particulier du théorème principal
de l'auteur \cite{Liang1}, la preuve présentée ici est plus compliquée que \cite{Liang1}, mais l'avantage est qu'on peut
adapter cette preuve à l'astuce de Salberger pour montrer une généralisation dans
\S \ref{Casparticulier2}. Cette preuve fait une partie essentielle de la preuve entière du théorème principal.

Premièrement, on admet la proposition suivante et montre le cas particulier considéré, ensuite, on montre la proposition.

\begin{prop}\label{keyprop1}
Soient $\pi:X\to\mathbb{P}^1$ une fibration à fibres géométriquement intègres et $D$ un sous-ensemble fini
de points fermés de $\mathbb{P}^1.$ Soit $\textsf{Hil}$ un sous-ensemble hilbertien généralisé de $\mathbb{P}^1.$
Supposons qu'il existe une famille
$\{z_v\}_{v\in\Omega_k}$ de zéro-cycles locaux de $X$ de degré $1.$

Alors, pour tout entier strictement positif $a$ et pour tout ensemble fini $S\subset\Omega_k,$
il existe les données suivantes:

(a) pour chaque $v\in S,$ un zéro-cycle effectif $z^2_v\in Z_0(X_v)$ tel que
$z_v-z_v^2$ soit $a$-divisible dans $Z_0(X_v);$ et pour tout tel $z^2_v,$ un
zéro-cycle effectif $\tau_v\in Z_0(X_v)$ tel que
$\pi_*(\tau_v)$ soit séparable à support hors de $D,$
et tel que $\tau_v$ soit suffisamment proche de $z^2_v;$

(b) un point fermé  $\theta \in\textsf{Hil}$ de degré $d\equiv1(mod\mbox{ }a)$ tel que
$\theta$ soit déployé localement partout,  tel que comme zéro-cycle $\theta$ soit suffisamment proche de $\pi_*(\tau_v)$
pour toute $v\in S,$ a fortiori $\theta$ et $\pi_*(z_v)$ sont rationnellement équivalents modulo $a,$
\textit{i.e.} ils ont la même image dans $CH_0(\mathbb{P}_v^1)/a$ pour toute $v\in\Omega_k.$
\end{prop}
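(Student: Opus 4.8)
Le plan est de construire les donn�es (a) et (b) par approximations successives sur $\mathbb{P}^1$, en contr�lant soigneusement le degr� du point $\theta$ et son appartenance � $\textsf{Hil}$. Je commencerais par traiter (a). Pour chaque $v\in S$, la donn�e d'un $z_v$ de degr� $1$ permet, en ajoutant un multiple convenable de $a\cdot w_v$ pour un z�ro-cycle effectif auxiliaire $w_v$, d'extraire un z�ro-cycle effectif $z_v^2$ dont la diff�rence avec $z_v$ est $a$-divisible dans $Z_0(X_v)$ : c'est une manipulation standard dans le groupe ab�lien libre $Z_0(X_v)$ une fois qu'on dispose d'un point ferm� effectif de la bonne classe modulo $a$. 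Ensuite, comme $X_v$ est lisse, le lemme de d�placement local (bougeotte dans la topologie $v$-adique sur les produits sym�triques, comme dans le Lemme \ref{lemWittenberg}) fournit un $\tau_v$ effectif suffisamment proche de $z_v^2$ dont la pouss�e $\pi_*(\tau_v)$ est un z�ro-cycle s�parable support� en dehors de l'ensemble fini $D$ : il suffit de d�placer les points du support au-dessus de points ferm�s distincts de $\mathbb{P}^1$ �vitant $D$, ce qui est possible puisque $\pi$ est dominant � fibre g�n�rique g�om�triquement int�gre.

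La partie d�licate est (b) : produire un \emph{unique} point ferm�\ $\theta$ (et non un simple z�ro-cycle effectif) qui soit simultan�ment dans $\textsf{Hil}$, de degr�\ $d\equiv 1\ (\mathrm{mod}\ a)$, d�ploy� en toute place, et $v$-adiquement proche de $\pi_*(\tau_v)$ pour $v\in S$. L'id�e centrale est d'appliquer un argument de type irr�ductibilit� de Hilbert g�om�trique/analytique sur $\mathbb{P}^1$. Le sous-ensemble $\textsf{Hil}$ correspond � un rev\^etement �tale fini $Z\xrightarrow{\rho}U\subset\mathbb{P}^1$ avec $Z$ int�gre, et un point ferm� de $U$ est dans $\textsf{Hil}$ exactement quand sa fibre par $\rho$ reste connexe. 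On cherche donc un point ferm�\ $\theta$ de $U$, proche des $\pi_*(\tau_v)$, tel que $\rho^{-1}(\theta)$ soit connexe et tel que $X_\theta$ ait un $k(\theta)$-point. Le contr\^ole du degr�\ modulo $a$ et la condition de d�ploiement local partout se traitent en imposant des conditions $v$-adiques ouvertes en un nombre fini de places suppl�mentaires hors de $S$ : en une place $v_0$ bien choisie, on force $\theta$ � avoir un point $k(v_0)$-rationnel au-dessus duquel la fibre est d�ploy�e, et on ajuste le degr� r�siduel pour atteindre la congruence $d\equiv 1\ (\mathrm{mod}\ a)$.

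Concr�tement, je r�aliserais $\theta$ comme l'image r�ciproque d'un hyperplan ou, mieux, comme une section convenable : le z�ro-cycle effectif $\pi_*(\tau_v)$ �tant s�parable et hors de $D$, ses points sont des points ferm�s lisses de $\mathbb{P}^1$, et l'on peut les regrouper en cherchant un polyn\^ome (section d'un fibr� $\mathcal{O}(d)$ sur $\mathbb{P}^1$) dont le lieu des z�ros est un point ferm� irr�ductible $\theta$ approximant ces donn�es $v$-adiques. L'existence d'un tel $\theta$ irr�ductible dans $\textsf{Hil}$ r�sulte d'une version effective du th�or�me d'irr�ductibilit� de Hilbert avec conditions locales (approximation faible sur l'espace des coefficients), combin�e au fait que les conditions \og $\rho^{-1}(\theta)$ connexe\fg\ et \og $\theta$ d�ploy� localement\fg\ sont garanties sur un ouvert hilbertien de param�tres. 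La v�rification que $\theta$ et $\pi_*(z_v)$ co\"incident dans $CH_0(\mathbb{P}^1_v)/a$ est alors automatique : sur $\mathbb{P}^1$ deux z�ro-cycles de m�me degr� sont rationnellement �quivalents, donc la congruence des degr�s modulo $a$ suffit.

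Le principal obstacle sera de garantir l'\emph{irr�ductibilit�} de $\theta$ (c'est-�-dire obtenir un point ferm�, pas un z�ro-cycle d�compos�) tout en respectant simultan�ment les conditions $v$-adiques impos�es en les places de $S$ \emph{et} la condition d'appartenance � $\textsf{Hil}$. Ces deux exigences tirent en sens oppos� : l'approximation locale tend � d�composer le lieu des z�ros en les fibres prescrites, tandis que l'irr�ductibilit� globale requiert un choix g�n�rique. La r�solution passe par le fait que les conditions de Hilbert d�finissent un sous-ensemble \og gros\fg\ (compl�mentaire d'un ferm� de Zariski propre apr�s passage � un rev\^etement) dans l'espace des param�tres, lequel rencontre tout voisinage $v$-adique non vide ; c'est exactement le point o� la stabilit� des sous-ensembles hilbertiens g�n�ralis�s par intersection (rappel�e dans \S \ref{rappels}) et leur compatibilit� avec l'approximation faible sur $\mathbb{P}^1$ deviennent essentielles.
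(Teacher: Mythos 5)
Your overall strategy (parametrize degree-$d$ effective divisors on $\mathbb{P}^1$ by the coefficient space $\mathbb{P}^d$ of sections of $\mathcal{O}(d)$, observe that the desired conditions on $\theta$ cut out a large set of parameters, then conclude by Hilbert irreducibility with weak approximation) is in the same family as the paper's proof; the paper proves exactly that irreducibility-with-local-conditions statement by hand, via geometric Chebotarev at one auxiliary place $v_c$ per conjugacy class of the group $H$ of the cover $U'\to U_{k'}$, the group-theoretic fact that a proper subgroup of a finite group cannot meet every conjugacy class, and the coprimality of $d$ with $[k':k]$ to tame the constant field extension. But two of your steps do not hold as written. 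The most serious one concerns the conclusion that $\theta$ is \emph{d\'eploy\'e localement partout}: you propose to obtain it by imposing $v$-adic open conditions at finitely many auxiliary places outside $S$. This cannot work, because $k(\theta)$ has infinitely many places, and proximity conditions only control the places of $k(\theta)$ lying above $S$ and the auxiliary places. The indispensable ingredient here is uniform: since all fibers of $\pi$ are geom\'etriquement int\`egres, a Lang-Weil estimate plus Hensel's lemma yields a finite set $S_2\subset\Omega_k$, depending only on the fibration, such that \emph{every} closed point $\theta$ with smooth fiber is split at all places of $k(\theta)$ above $\Omega_k\setminus S_2$; one enlarges $S$ to contain $S_2$, handles the places over $S$ by the implicit function theorem, and keeps $\theta$ away from the non-smooth locus by putting it into $D$. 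Nothing in your proposal plays this role. (Also, the paper's auxiliary place $v_0$ serves a different purpose than the one you assign it: closeness at $v_0$ to an irreducible point of degree $d$ is what forces $\theta$ itself to be irreducible; it has nothing to do with splitting or with the congruence on $d$, which is fixed beforehand by the construction of the $\tau_v$.)

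The second gap is that your key claim --- that the condition \emph{$\rho^{-1}(\theta)$ connexe} is guaranteed on a Hilbertian set of parameters --- is asserted, not proved, and it is not a formal consequence of stability of generalized Hilbertian subsets under intersection: $\textsf{Hil}$ is a condition on closed points of $\mathbb{P}^1$ of varying degree $d$, whereas weak approximation \`a la Ekedahl applies to \emph{rational} points of the parameter space $\mathbb{P}^d$. To bridge the two, one must exhibit an irreducible finite \'etale cover of an open subset of $\mathbb{P}^d$ whose fiber connectedness at $f$ implies $\theta_f\in\textsf{Hil}$; for instance $Z\times_{\mathbb{P}^1}\Theta$, where $\Theta\subset\mathbb{P}^1\times\mathbb{P}^d$ is the universal divisor, restricted over the open locus of squarefree divisors supported in $U$ --- irreducibility holds because this scheme is a $\mathbb{P}^{d-1}$-bundle over the irreducible $Z$, and connectedness of its fiber at $f$ forces both the irreducibility of $\theta_f$ and $\rho^{-1}(\theta_f)$ connected. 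This construction (or the paper's explicit Chebotarev argument, which amounts to reproving Ekedahl's theorem in this case) is the heart of the matter and is missing from your sketch. Finally, a smaller but real point: for $\theta$ to be simultaneously close to all $\pi_*(\tau_v)$, $v\in S$, the cycles $\tau_v$ must all have the \emph{same} degree $d\equiv 1\pmod{a}$; your construction of $z_v^2$ by adding multiples of place-dependent auxiliary cycles $a\cdot w_v$ does not ensure equal degrees, which is precisely why the paper uses one fixed \emph{global} cycle $z_0$ and adds suitable multiples of $az_0$ at each place.
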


\begin{proof}[Démonstration du théorème principal sous les hypothèses au début de \S \ref{Casparticulier1}.]
On part d'une famille $\{z_v\}_{v\in\Omega_k}$ de zéro-cycles de degré $1$ sur $X,$
la proposition \ref{keyprop1} donne un point fermé $\theta\in\textsf{Hil}$ qui satisfait (a) et (b)
de la proposition.
Si $X_\theta$ satisfait le principe de Hasse (pour les points rationnels ou
pour les zéro-cycles de degré $1$), il existe un zéro-cycle global $z_\theta$ de degré $1$
sur la $k(\theta)$-variété $X_\theta,$ c'est un zéro-cycle de degré $d\equiv1(mod\mbox{ }a)$ sur $X.$
Si l'on prend pour $a$ le degré $d_Q$ d'un certain point fermé $Q$ de $X,$
le zéro-cycle $z=z_\theta-hQ$ est alors de degré $1$ sur $X$ pour un certain entier convenable $h,$
ceci montre (1).
On fixe un entier strictement positif $m$ et un sous-ensemble fini $S$ de places de $k.$
On suppose que la $k(\theta)$-variété $X_\theta$ satisfait l'approximation faible
(pour les points rationnels ou pour les zéro-cycles de degré $1$),
d'après le théorème des fonctions implicites et le lemme \ref{lemWittenberg}, on peut choisir
le zéro-cycle $z_\theta$ ci-dessus tel que
$z_\theta$ et $\tau_v$ ont la même image dans $CH_0(X_v)/m$ pour toute $v\in S.$
De l'autre côté, grâce au lemme \ref{lemWittenberg}, la proposition \ref{keyprop1}(a) implique que
$\tau_v$ et $z_v^2$ ont la même image dans $CH_0(X_v)/a.$
Si l'on prend $a$ un multiple de $d_Qm,$
les zéro-cycles $z$ et $z_v$ ont la même image dans $CH_0(X_v)/m$ pour toute $v\in S,$
ceci montre (2). Puisque $X_\eta$ est géométriquement rationnellement connexe, d'après le corollaire 2.2
de Wittenberg \cite{Wittenberg}, l'application
$CH_0(X_v)\to CH_0(\mathbb{P}^1_v)$ est injective pour presque toute $v.$ Quitte à augmenter $S,$
on peut supposer l'injectivité pour toute $v\notin S.$ Pour une telle $v,$ on a
$\theta\sim\pi_*(z_v)+ac_v$ pour un certain $c_v\in Z_0(\mathbb{P}^1_v).$
Si l'on prend $a=a'm$ tel que $a'$ soit un multiple de l'indice de la fibre générique
$X_\eta,$ le zéro-cycle $ac_v$ s'écrit comme $m\pi_*(z^0_v)$ pour un certain $z^0_v\in Z_0(X_v),$
\textit{cf.} \cite{Wittenberg}, Lemme 2.4. D'où $z_\theta\sim z_v+mz_v^0$ sur $X_v,$ ceci
montre (3).
\end{proof}

\begin{lem}\label{anydeglocalfield}
Soit $D$ un ensemble fini de points fermés de $\mathbb{P}^1_k,$ où $k$ est un corps local non-archimédien.
Alors, pour tout nombre entier strictement positif $n,$ il existe un point fermé de $\mathbb{P}^1\setminus D$ de degré
$n.$
\end{lem}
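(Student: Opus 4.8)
Le plan est de ramener l'existence d'un point ferm\'e de degr\'e $n$ \`a l'existence d'une extension de corps de degr\'e $n$ de $k$, puis d'\'eviter l'ensemble fini $D$ en produisant une infinit\'e de tels points.

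D'abord, je rappellerais qu'un point ferm\'e de $\mathbb{A}^1_k=\mathrm{Spec}\,k[x]$ de degr\'e $n$ correspond \`a un id\'eal maximal $(f)$ avec $f\in k[x]$ unitaire irr\'eductible de degr\'e $n$, son corps r\'esiduel \'etant $k[x]/(f)$, extension de $k$ de degr\'e $n$. Comme $k$ est un corps local non-archim\'edien, son corps r\'esiduel est un corps fini $\mathbb{F}_q$; pour tout $n$ il existe une (unique) extension non ramifi\'ee $L/k$ de degr\'e $n$, relevant l'extension $\mathbb{F}_{q^n}/\mathbb{F}_q$. Comme $L/k$ est s\'eparable, le th\'eor\`eme de l'\'el\'ement primitif fournit un g\'en\'erateur $\alpha$, dont le polyn\^ome minimal $f$ est unitaire irr\'eductible de degr\'e $n$; il d\'efinit donc un point ferm\'e $P_0\in\mathbb{A}^1_k\subset\mathbb{P}^1_k$ de degr\'e $n$.

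Ensuite, pour pouvoir \'eviter $D$, je produirais une infinit\'e de points ferm\'es de degr\'e $n$ par translation: pour $a\in k$, l'automorphisme $x\mapsto x+a$ de $\mathbb{A}^1_k$ envoie $P_0$ sur un point ferm\'e $P_0+a$ de degr\'e $n$, de polyn\^ome minimal $f(x-a)$. Deux tels points $P_0+a$ et $P_0+a'$ co\"incident seulement si $a-a'$ appartient \`a l'ensemble fini des diff\'erences $\sigma(\alpha)-\tau(\alpha)$, o\`u $\sigma,\tau$ parcourent les plongements de $L$ dans une cl\^oture alg\'ebrique; les fibres de $a\mapsto P_0+a$ sont donc finies. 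Comme $k$ est infini, ceci fournit une infinit\'e de points ferm\'es distincts de degr\'e $n$.

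Enfin, $D$ \'etant fini, au moins l'un de ces points n'appartient pas \`a $D$, ce qui conclut. Le point crucial est l'existence d'une extension de degr\'e $n$ de $k$, assur\'ee par la th\'eorie locale au moyen des extensions non ramifi\'ees; l'\'evitement de $D$ est ensuite purement \'el\'ementaire.
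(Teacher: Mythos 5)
Your proof is correct, and it shares the same overall skeleton as the paper's: exhibit infinitely many closed points of degree $n$ in $\mathbb{A}^1_k\subset\mathbb{P}^1_k$, then use the finiteness of $D$ to avoid it. The difference is in how the degree-$n$ irreducible polynomials are produced. The paper appeals to Eisenstein polynomials: these are automatically irreducible, there are infinitely many monic ones of degree $n$, and distinct monic irreducible polynomials generate distinct maximal ideals of $k[T]$, hence distinct closed points --- so existence and infinitude come in one stroke from the discrete valuation on $k$ (i.e.\ from totally ramified extensions). You instead obtain existence from the unramified extension $L/k$ of degree $n$ (via the primitive element theorem), and then manufacture infinitude by translating the resulting point by elements $a\in k$, checking that each closed point arises from only finitely many values of $a$; since $k$ is infinite, this yields infinitely many distinct points of degree $n$. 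Your route is slightly longer (it needs the primitive element theorem and the fiber-finiteness of the translation map), but it is also more flexible: it works over any infinite field admitting a separable extension of degree $n$, whereas the Eisenstein argument genuinely uses the valuation. Both arguments are complete and correct.
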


\begin{proof}
Comme $k$ est un corps
local non-archimédien, il existe alors un polynôme irréductible de degré $n,$ qui va définir un point fermé de
$Spec(k[T])=\mathbb{A}^1\subset\mathbb{P}^1$ de degré $n.$ Il y a un nombre infini de tels polynômes, par exemple
les polynômes d'Eisenstein, on peut donc le choisir
tel que le point fermé associé soit en dehors de l'ensemble fini $D.$
\end{proof}

\begin{proof}[Démonstration de la proposition \ref{keyprop1}.]
L'idée de cette démonstration provient de la preuve du théorème 1.3 d'Ekedahl \cite{Ekedahl}
et de la preuve de la proposition 3.2.1 de Harari \cite{Harari}.

On note $K=k(\mathbb{P}^1)$ et $K_Z=k(Z)$ l'extension finie de $K$ associée au revêtement fini $Z\to \mathbb{P}^1$
définissant $\textsf{Hil},$ qui est étale au-dessus d'un ouvert dense $U\subset\mathbb{P}^1.$
On prend $K'$ la clôture galoienne de $K_Z$ dans $\bar{K}$ une clôture algébrique de $K$ fixée au début.
Soit $Z'$ la courbe intègre normale projective de corps des fonctions $K'$ avec les morphismes finis
$Z'\to Z\to \mathbb{P}^1$ associés aux extensions des corps $K\subset K_Z\subset K',$
on note $U'$ l'image réciproque de $U$ dans $Z'.$
On note $k'$ la fermeture algébrique de $k$ dans $K',$ l'extension $k'/k$ est finie, on note $h$ son degré.

\begin{lem}\label{lemgeoint}
Les donnés ci-dessus satisfont: (quitte à restreindre $U$ et $U'$ si nécessaire)

(1) le $k$-morphisme $U'\To U$ est étale fini surjectif galoisien,

(2) $U'$ est une courbe géométriquement intègre au-dessus de $k',$

(3) le diagramme suivant est commutatif, où $U'\To U_{k'}$ est un $k'$-morphisme.
\SelectTips{eu}{12}$$\xymatrix@C=20pt @R=14pt{
U'\ar[d]\ar[rd]&\\
U&U_{k'}=U\times_kk'\ar[l]
}$$

(4) les fibres de $\pi:X\to\mathbb{P}^1$ au-dessus de $U$ sont lisses et géométriquement intègres.
\end{lem}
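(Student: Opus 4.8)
La d\'emonstration repose sur un argument d'\'etalement g\'en\'erique (\emph{spreading out}) combin\'e \`a l'\'etude des corps de fonctions. Le plan est de traiter s\'epar\'ement les quatre assertions, quitte \`a restreindre $U$ (et donc son image r\'eciproque $U'$) un nombre fini de fois, l'intersection de ces ouverts denses restant un ouvert dense. Pour (4), la fibre g\'en\'erique $X_\eta$ est r\'eguli\`ere (car $X$ est lisse sur $k$) et g\'eom\'etriquement int\`egre (car $\pi$ est une fibration), donc lisse sur $K$ puisqu'on est en caract\'eristique z\'ero. Comme $\pi$ est propre ($X$ \'etant projective), le lieu des $s\in\mathbb{P}^1$ au-dessus desquels la fibre est lisse est ouvert; le lieu o\`u elle est g\'eom\'etriquement int\`egre est constructible (EGA IV, 9.7.7) et contient le point g\'en\'erique. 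Leur intersection contient un ouvert dense, que l'on prend pour $U$.

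Pour (1), par construction $K'/K$ est finie galoisienne de groupe $G=\mathrm{Gal}(K'/K)$, et $Z'\to\mathbb{P}^1$ est le normalis\'e de $\mathbb{P}^1$ dans $K'$, donc fini. En caract\'eristique z\'ero ce morphisme est g\'en\'eriquement \'etale, de sorte qu'en restreignant $U$ on peut supposer $U'\to U$ fini \'etale surjectif. L'action de $G$ sur $K'$ induit une action de $G$ sur $U'$ au-dessus de $U$, simplement transitive sur les fibres g\'eom\'etriques apr\`es une derni\`ere restriction de $U$, d'o\`u le caract\`ere galoisien.

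Les points (2) et (3) sont les plus d\'elicats, car ils font intervenir le corps des constantes. Par d\'efinition $k'$ est la fermeture alg\'ebrique de $k$ dans $K'=k(U')$; comme $U'$ est normale int\`egre de corps des fonctions $K'$ et qu'on est en caract\'eristique z\'ero, le fait que $k'$ soit alg\'ebriquement clos dans $K'$ entra\^{\i}ne la g\'eom\'etrique int\'egrit\'e de $U'$ sur $k'$, ce qui donne (2). Pour (3), comme $\mathbb{P}^1$ est g\'eom\'etriquement int\`egre sur $k$, le produit $U_{k'}=U\times_kk'$ est int\`egre de corps des fonctions le compositum $K\cdot k'$ dans $K'$; les inclusions de $k'$-alg\`ebres $K\subset K\cdot k'\subset K'$ induisent, par passage au normalis\'e et restriction des ouverts, les morphismes $U'\to U_{k'}\to U$ dont le compos\'e est le morphisme structural $U'\to U$, d'o\`u la commutativit\'e du diagramme.

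Le principal obstacle sera la coh\'erence des restrictions successives: il faut s'assurer qu'en restreignant $U$ pour (1) et (4) on pr\'eserve les propri\'et\'es de $U'$ utilis\'ees en (2) et (3), et surtout identifier correctement le corps des fonctions de $U_{k'}$ avec $K\cdot k'$. Ce dernier point repose sur la disjonction lin\'eaire de $K$ et $k'$ au-dessus de $k$, cons\'equence de ce que $k$ est alg\'ebriquement clos dans $K$ (car $\mathbb{P}^1$ est g\'eom\'etriquement int\`egre). Une fois ceci \'etabli, (2) et (3) en d\'ecoulent sans difficult\'e suppl\'ementaire.
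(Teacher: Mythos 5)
Votre démonstration est correcte et suit globalement le même squelette que celle de l'article : restrictions successives de $U$ (et de $U'$ pris comme image réciproque), lissité et intégrité géométrique génériques pour (4), théorie de Galois des courbes normales pour (1), et le critère du corps des constantes ($k'$ algébriquement clos dans $K'$, caractéristique nulle) pour (2). Le seul endroit où vous divergez réellement est le traitement du point délicat du lemme, à savoir la définition de la structure de $k'$-schéma sur $U'$ et du morphisme $U'\to U_{k'}$. L'article écrit $k'=k(e)$ (élément primitif), voit $\iota(e)$ comme fonction rationnelle sur $Z'$ et retire ses pôles (ainsi que leur orbite galoisienne), de sorte que $k'\to\mathcal{O}_{Z'}(U')$ soit bien défini ; cela fournit le morphisme structural $U'\to Spec(k')$, puis $U'\to U_{k'}$ s'obtient par la propriété universelle du produit fibré $U_{k'}=U\times_kk'$. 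Vous procédez dans l'autre sens : vous identifiez $k(U_{k'})$ au compositum $K\cdot k'$ par disjonction linéaire (argument valide : $k$ étant algébriquement clos dans $K$ et $k'/k$ séparable, $K\otimes_kk'$ est un corps), vous reconnaissez $\mathbb{P}^1_{k'}$ comme le normalisé de $\mathbb{P}^1$ dans $K\cdot k'$, et la fonctorialité de la normalisation appliquée à $K\subset K\cdot k'\subset K'$ donne $U'\to U_{k'}\to U$, la structure de $k'$-schéma sur $U'$ étant alors le composé $U'\to U_{k'}\to Spec(k')$. Les deux routes sont valables : la vôtre est plus canonique et évite le choix d'un élément primitif ainsi que l'étape d'élimination des pôles, tandis que celle de l'article est plus concrète et rend la structure de $k'$-algèbre explicite au niveau des fonctions régulières ; votre remarque finale sur la cohérence des restrictions et sur l'identification de $k(U_{k'})$ avec $K\cdot k'$ cible exactement le point que l'article traite par sa construction.
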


\begin{proof}
Comme l'extension $k'/k$ est finie et $car(k)=0,$ on écrit $k'=k(e)$ avec $e\in k'.$
Son image par le $k$-plongement $\iota:k'\to K'$ est une fonction  rationnelle $\iota(e)$ sur la courbe projective $Z'.$
On note $P$ l'ensemble fini des pôles de $\iota(e).$ Quitte à restreindre $U$ et $U',$
on peut supposer que $U'\cap P=\emptyset$
et que $U'$ se surjecte sur $U$ par le morphisme
$Z'\to\mathbb{P}^1.$ Le morphisme des $k$-algèbres $k'\to \mathcal{O}_{Z'}(U')\subset K'$ est alors bien défini, qui  donne
un $k$-morphisme $U'\to Spec(k').$
On peut supposer de plus que
les fibres de $\pi:X\to\mathbb{P}^1$ au-dessus de $U$ sont lisses et géométriquement intègres,
et que $U'\to U$ est étale galoisien (en enlevant l'orbite
de $P$ sous l'action de $Gal(K'/K)$ et les points ramifiés),
pour plus de détails sur la théorie de Galois pour une courbe
algébrique intègre normale \textit{cf.} \cite{Szamuely}, Chapitre 4.
La courbe $U'$ est géométriquement intègre sur $k',$ car $k'$ est algébriquement fermé dans $K'=k'(U').$
Le $k'$-morphisme canonique $U'\To U_{k'}=U\times_kk'$ satisfait le diagramme commutatif dans (3).
\end{proof}

Pour trouver un point fermé ${\theta}\in\textsf{Hil}\subset\mathbb{P}^1,$
il suffit de trouver  ${\theta}$
tel que la fibre de $U'\to U$ en ${\theta}$ soit connexe.

Quitte à augmenter $D,$ on peut supposer que l'ensemble fini $D$ contient
tout point fermé $\theta$ de $\mathbb{P}^1$ dont sa fibre $X_\theta$ n'est pas lisse.

On note $G=Gal(K'/K),$ c'est le groupe de Galois du revêtement fini étale $U'\to U,$
le revêtement fini étale $U'\to U_{k'}$ est aussi galoisien, de groupe noté par $H,$ qui est un sous-groupe de $G.$
On définit $I\subset\Omega_k$ comme l'ensemble des places de $k$ qui sont totalement décomposées
dans (la clôture galoisienne de) $k',$ c'est un ensemble infini d'après le théorème de \v{C}ebotarev.

On étend le $k'$-revêtement fini étale galoisien $U'\to U_{k'}$ à un modèle entier
$\mathcal{U}'\to\mathcal{V}$ au-dessus de $O_{k',S'_1}$ qui reste
un revêtement fini étale  galoisien de groupe $H,$ où $S'_1\subset\Omega_{k'}$
est un ensemble fini, \textit{cf.} le théorème 2.1 de \cite{Margaux} et (8.4.4) de \cite{EGAIV}.
En augmentant $S'_1$ si nécessaire, on peut supposer que $S'_1=S_1\otimes_kk'$ pour un certain ensemble
fini $S_1\subset\Omega_k$ et qu'il existe un modèle
$\mathcal{U}$ de $U$ sur $O_{k,S_1}$ satisfaisant le diagramme commutatif suivant, dont $\mathcal{U}'\to\mathcal{V}$ est un
$O_{k',S'_1}$-morphisme et les autres deux flèches sont des $O_{k,S_1}$-morphismes.
\SelectTips{eu}{12}$$\xymatrix@C=20pt @R=14pt{
\mathcal{U}'\ar[d]_G\ar[rd]^H &\\
\mathcal{U}&\mathcal{V}\ar[l]
}$$

L'estimation de Lang-Weil avec le lemme de Hensel  donne un sous-ensemble fini $S_2$ de $\Omega_k$ tel que
pour tout point fermé $\theta$ de $\mathbb{P}^1$ tel que $X_\theta$ soit lisse, on ait
$X_\theta(k(\theta)_w)\neq\emptyset$ pour toute $w\in (\Omega_k\setminus S_2)\otimes_kk(\theta),$
\textit{cf.} \cite{Liang1}, Lemme 3.3.

En augmentant $S$ si nécessaire, on peut supposer que $S\supset S_1\cup S_2\cup\Omega_\infty.$
Quitte à remplacer l'entier $a$ par $a[k':k],$ on peut supposer que $a$ est un multiple de $[k':k].$

On fixe un point fermé $z_0\in Z_0(X)$ de degré $d_0$ tel que $y_0=\pi_*(z_0)$ soit
séparable et à support en dehors de $D.$
On choisit un $k$-point noté par $\infty$ de $\mathbb{P}^1$ hors de $D\cup supp(y_0).$
On part d'une famille de zéro-cycles $\{z_v\}_{v\in\Omega_k}.$ Pour $v\in S,$
on écrit $z_v=z_v^+-z_v^-$ où $z_v^+$ et $z_v^-$ sont effectifs à supports disjoints.
On pose $z_v^1=z_v+ad_0z_v^-=z_v^++(ad_0-1)z_v^-$ de degré $\equiv 1(mod\mbox{ }ad_0),$ mais ils ne sont pas nécessairement de
même degré quand $v\in S$ varie.
On leur ajoute un multiple convenable de $az_0$ pour chaque $v,$ et on obtient $z_v^2$ effectif de même degré assez grand
$d\equiv 1(mod\mbox{ }ad_0).$
D'après le lemme 1.2 de \cite{Liang2},
on bouge $z_v^2$ un peu et obtient $\tau_v\in Z_0(X_v)$ effectif suffisamment proche de  $z_v^2$
tel que $\pi_*(\tau_v)$ soit séparable à support en dehors de $D\cup\{supp(y_0)\}\cup\{\infty\}.$

On trouve une fonction $f_v\in k_v(\mathbb{P}^1)^*/k_v^*$
telle que $div_{\mathbb{P}^1_v}(f_v)=\pi_*(\tau_v)-d\infty,$ pour toute $v\in S.$

\begin{lem}\label{E}
Soit $\textbf{E}$ l'ensemble fini des classes de conjugaison du groupe $H.$ Alors, il existe une injection
$$\gamma:\textbf{E}\to((\Omega_k\setminus S)\cap I)\otimes_kk'$$ qui satisfait les conditions suivantes,

\noindent- pour tout $c\in\textbf{E}$ il existe un point de corps résiduel fini
$\bar{x}_c\in\mathcal{V}(k'(\gamma(c)))$ tel que le Frobenius associé $Frob_{\bar{x}_c}$ soit contenu dans
la classe $c;$

\noindent- les places $v_{c_1},v_{c_2}\in\Omega_k$ sont distincts si $c_1\neq c_2\in\textbf{E},$ où $v_c$
est la place de $k$ au-dessous de $\gamma(c)\in\Omega_{k'}$ pour $c\in\textbf{E}.$

De plus, si l'on note $v=v_c$ et $w'=\gamma(c),$ les extensions
$k'_{w'}/k_v$ et $k'(w')/k(v)$ sont triviales, donc
$\mathcal{U}(k(v_c))=\mathcal{V}(k'(\gamma(c)))$ et $U(k_{v_c})=U_{k'}(k'_{\gamma(c)})$ pour tout $c\in\textbf{E}.$
\end{lem}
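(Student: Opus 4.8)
The plan is to realise $\gamma$ by applying the function-field Chebotarev density theorem fibre by fibre to the geometric Galois cover $\mathcal{U}'\to\mathcal{V}$, and then to use the infinitude of $I$ to spread the chosen places apart so that $\gamma$ becomes injective. The crucial structural input is that $H=\mathrm{Gal}(U'/U_{k'})$ acts \emph{geometrically}: because $U'$ is geometrically integral over $k'$ (Lemme \ref{lemgeoint}(2)), after enlarging $S_1'$ if necessary the reduction $\mathcal{U}'_{w'}\to\mathcal{V}_{w'}$ is, for every place $w'\in\Omega_{k'}\setminus S_1'$, a finite étale Galois cover with group $H$ of smooth curves over the finite field $k'(w')$ with $\mathcal{U}'_{w'}$ \emph{geometrically} connected. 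For a degree-one closed point $\bar{x}\in\mathcal{V}(k'(w'))$ the fibre of $\mathcal{U}'_{w'}$ above $\bar{x}$ is an $H$-torsor over $k'(w')$, on which the geometric Frobenius acts, so there is a well-defined conjugacy class $\mathrm{Frob}_{\bar{x}}\in\mathbf{E}$.

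The key estimate is then the following: by the Lang--Weil bound combined with the Chebotarev density theorem over function fields, for each conjugacy class $c\in\mathbf{E}$ the number of $\bar{x}\in\mathcal{V}(k'(w'))$ with $\mathrm{Frob}_{\bar{x}}\in c$ equals $\frac{|c|}{|H|}\,|k'(w')|+O(|k'(w')|^{1/2})$, with an implied constant depending only on the cover $\mathcal{U}'\to\mathcal{V}$ (hence uniform in $w'$). In particular there is a bound $Q$ such that whenever $|k'(w')|\ge Q$, every class $c$ is realised as $\mathrm{Frob}_{\bar{x}_c}$ for some $\bar{x}_c\in\mathcal{V}(k'(w'))$.

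With this in hand I would finish as follows. Enumerate $\mathbf{E}=\{c_1,\dots,c_r\}$. Since $(\Omega_k\setminus S)\cap I$ is infinite (Chebotarev, as already observed) while only finitely many places have residue field of cardinality below any given bound, I can choose pairwise distinct places $v_{c_1},\dots,v_{c_r}\in(\Omega_k\setminus S)\cap I$ each of residue cardinality $\ge Q$. As $v_{c_i}\in I$ is totally split in the Galois closure of $k'$, there is a place $\gamma(c_i)=w'_i$ of $k'$ above $v_{c_i}$ with $k'_{w'_i}/k_{v_{c_i}}$ and $k'(w'_i)/k(v_{c_i})$ trivial; in particular $|k'(w'_i)|=|k(v_{c_i})|\ge Q$, so the point $\bar{x}_{c_i}$ with $\mathrm{Frob}_{\bar{x}_{c_i}}\in c_i$ exists. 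Setting $\gamma(c_i)=w'_i$ defines the map; injectivity and the distinctness of the $v_{c_i}$ hold by construction, and the trivial-extension assertions are immediate from $v_{c_i}\in I$, whence $\mathcal{U}(k(v_{c_i}))=\mathcal{V}(k'(\gamma(c_i)))$ and $U(k_{v_{c_i}})=U_{k'}(k'_{\gamma(c_i)})$ follow by base change through $k'_{w'_i}=k_{v_{c_i}}$.

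The main obstacle I expect is justifying the equidistribution step, namely that over a finite field of large cardinality \emph{every} conjugacy class of $H$ actually occurs as the Frobenius of a rational point of $\mathcal{V}$. This is precisely where the geometric connectedness of $\mathcal{U}'_{w'}$ (and not merely its connectedness) is indispensable, and where one must invoke the Weil bounds uniformly in $w'$; everything else is bookkeeping, namely arranging the distinctness of the chosen places and transporting the triviality of the local and residual extensions through the base-change identifications.
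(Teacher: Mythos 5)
Your proof is correct and takes essentially the same route as the paper: the equidistribution estimate you sketch via Lang--Weil and function-field Chebotarev applied fibre by fibre to $\mathcal{U}'\to\mathcal{V}$ (using that $U'$ is geometrically integral over $k'$) is precisely the geometric Chebotarev density theorem (Ekedahl, Lemme 1.2) that the paper invokes as a black box. The remaining steps — the infinitude of $I$ to choose pairwise distinct places, and the total splitness defining $I$ to get the triviality of $k'_{w'}/k_{v}$ and $k'(w')/k(v)$ and the resulting identifications — coincide with the paper's argument.
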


\begin{proof}
On remarque que $\mathcal{U}'\to\mathcal{V}$ est un revêtement galoisien de groupe $H$ où la fibre générique $U'$ de
$\mathcal{U}'\to Spec(O_{k',S'_1})$ est une variété géométriquement intègre au-dessus de $k',$
le théorème de densité de \v{C}ebotarev géométrique (\cite{Ekedahl}, Lemme 1.2)
donne l'injection $\gamma$ vérifiant la première condition. L'infinité de l'ensemble $I$
assure que la deuxième condition peut simultanément être vérifiée. La dernière assertion provient
de la construction de $I.$
\end{proof}

D'après le lemme de Hensel, pour chaque $c\in\textbf{E},$ le point
$\bar{x}_c$ se relève en un point $x_c\in\mathcal{V}(O_{w'})\subset U_{k'}(k'_{w'})$
où $O_{w'}$ est l'anneau d'entiers du corps local $k'_{w'}$ avec $w'=\gamma(c).$
D'après le lemme \ref{anydeglocalfield},
on trouve alors un point fermé $x'_c$ de $U_{v_c}$ de degré $d-1$
en dehors de $D\cup supp(y_0)\cup \{\infty,x_c\}.$ On a
$x_c+x'_c\sim d\infty\in Z_0(\mathbb{P}^1_{v_c})$ et
on obtient une fonction $f_{v_c}\in k_{v_c}(\mathbb{P}^1)^*/k_{v_c}^*$
telle que $div_{\mathbb{P}^1_{v_c}}(f_{v_c})=(x_c+x'_c)-d\infty\in Z_0(\mathbb{P}^1_{v_c})$
pour tout $c\in \textbf{E}.$

De même, on prend $v_0\in\Omega_k\setminus S\setminus \{v_c,c\in\textbf{E}\}$ et obtient
un point fermé $x_{v_0}$ de $U_{v_0}\subset\mathbb{P}^1_{v_0}$ de degré $d$ en dehors de
$D\cup supp(y_0)\cup \{\infty\}$ et une fonction $f_{v_0}\in k_{v_0}(\mathbb{P}^1)^*/k_{v_0}^*$ telle que
$div_{\mathbb{P}^1_{v_0}}(f_{v_0})=x_{v_0}-d\infty\in Z_0(\mathbb{P}^1_{v_0}).$

D'après le théorème de Riemann-Roch $\Gamma(\mathbb{P}^1,\mathcal{O}_{\mathbb{P}^1}(d\infty))$
est un espace vectoriel de dimension $d.$
On applique l'approximation faible pour $\mathbb{P}^{d-1},$
on trouve une fonction $f\in k(\mathbb{P}^1)^*$ qui est suffisamment
proche des $f_v$ pour $v\in S\cup\{v_c,c\in\textbf{E}\}\cup\{v_0\}.$
On a alors $div_{\mathbb{P}^1}(f)={\theta}-d\infty$ avec
${\theta}$ un zéro-cycle effectif séparable à support en dehors de $D\cup supp(y_0)\cup\{\infty\},$
de plus,

(i) ${\theta}$ est suffisamment proche de $\pi_*(\tau_v)$ pour $v\in S,$

(ii) ${\theta}$ est suffisamment proche de $x_c+x'_c$ pour $c\in \textbf{E},$

(iii) ${\theta}$ est suffisamment proche de $x_{v_0}.$

Le zéro-cycle ${\theta}$ est en fait un point fermé de $U\subset\mathbb{P}^1$ de degré $d$ par (iii).

Le zéro-cycle ${\theta}$ est  déployé localement partout. En fait,
pour les places dans $(\Omega_k\setminus S)\otimes_kk({\theta})$ l'assertion
suit de l'estimation de Lang-Weil mentionnée précédemment, pour les places dans $S\otimes_kk({\theta})$
l'assertion suit du théorème des fonctions implicites.

Pour conclure, il reste à montrer que la fibre de $U'\to U$ au point ${\theta}$ est connexe.

On note $L=k({\theta}),$ on a alors $[L:k]=d.$
Le point fermé ${\theta},$ vu comme un $L$-point rationnel, est suffisamment proche de $x_c+x'_c$ pour
$c\in\textbf{E},$ ceci implique qu'il existe
$w\in\Omega_{L}$ au-dessus de $v_c$ tel que $L_w/k_{v_c}$ soit une extension triviale et
l'image de ${\theta}$ par l'application $U(L)\to U(L_w)$
est suffisamment proche de $x_c\in \mathcal{U}(O_w)\subset U(L_w)=U(k_{v_c}).$
Donc ${\theta}$ est un point entier de $U$ (pour le modèle $\mathcal{U}$) dont
la réduction modulo $w$ est exactement
$\bar{x}_c\in\mathcal{U}(L(w))=\mathcal{U}(k(v_c))=\mathcal{V}(k'(\gamma(c))),$ où la deuxième égalité
provient du lemme \ref{E}.

On considère le revêtement (fini étale) $\phi:U_{k'}\to U.$ Le point fermé ${\theta}$ de $U$
donne un zéro-cycle $\phi^*({\theta})=Spec(L)\times_UU_{k'}\simeq Spec(L\otimes_kk')$ de $U_{k'}$ de degré $d.$
Comme $d\equiv1(mod\mbox{ }a),$ $d$ est premier à $[k':k],$
l'algèbre étale $L'=L\otimes_kk'$ reste alors un corps,
le zéro-cycle ${\theta}'=\phi^*({\theta})$ est donc un point fermé
de $U_{k'}$ de corps résiduel $k'({\theta}')=L'.$ En notant
$w'=\gamma(c)|v_c,$ on rappelle que dans le lemme \ref{E} on sait $k'_{w'}=k_{v_c},$  $k'(w')=k(v_c),$
$\mathcal{U}(k(v_c))=\mathcal{V}(k'(w')),$ et $U(k_{v_c})=U_{k'}(k'_{w'}).$ Le point ${\theta}'\in U_{k'}(L'_{\lambda})$ définit
en fait un point entier de $U_{k'}$ (pour le modèle $\mathcal{V}$) de réduction modulo $\lambda$ exactement
$\bar{x}_c\in\mathcal{V}(L'(\lambda))=\mathcal{V}(k'(w')),$
où $\lambda|v_c$ est une des places de $L'$ au-dessus de $w'=\gamma(c)\in\Omega_{k'}$ et au-dessus de $w\in\Omega_L$ à la fois
($\lambda$ est alors totalement décomposée  au-dessus de $v_c$).
En considérant le revêtement galoisien $\mathcal{U}'\to\mathcal{V}$ du groupe $H,$
comme $\theta'$ est un point de réduction $\bar{x}_c,$
la classe de conjugaison dans $H$ de l'automorphisme de Frobenius
$Frob_{\bar{x}_c}$ rencontre l'image de l'application
$Gal(\bar{L'}/L')=Gal(\overline{k'(\theta')}/k'(\theta'))\to H,$ cette dernière flèche est induite par
le point fermé ${\theta}'$ de $U_{k'}$
via le choix d'un relèvement du composé $Spec(\bar{L'})\to {\theta}'=Spec(L')\to U_{k'}$ à $U'.$
Ceci vaut pour tout $c\in\textbf{E}.$
L'application $Gal(\bar{L'}/L')\to H$ est donc surjective d'après un argument de la théorie des groupes finis,
\textit{cf.} \cite{Ekedahl}, Lemme 1.1.
La fibre de $U'\to U$ en ${\theta}$ est exactement la fibre de
$U'\to U_{k'}$ en ${\theta}'=\phi^{-1}({\theta}),$ elle est
alors connexe, ainsi ${\theta}\in \textsf{Hil}.$
\end{proof}

\begin{rem}\label{whynotC}
La méthode de la preuve présentée ici ne fonctionne que pour $\mathbb{P}^1.$
Si l'on part d'une courbe en bas $C$ de genre quelconque, on va
obtenir ${\theta}$ un zéro-cycle global séparable de $C$ qui n'est pas
nécessairement un point fermé. On écrit ${\theta}=\sum_j {\theta}_j$
où ${\theta}_j\simeq Spec(L_j)$ sont des points fermés districts de $C.$
On va trouver que $H$ est engendré par les images de $Gal(\bar{L}_j/L_j)\to H.$ Ceci ne suffit pas pour déduire que
la fibre de $Z'\to C$ en chaque ${\theta}_j$ est connexe.
\end{rem}


\section{Cas particulier où (\textsc{Abélienne-Scindée}) est vérifiée et $B=\mathbb{P}^1$}\label{Casparticulier2}

Dans cette section, on montre un cas particulier du théorème principal (sauf l'exactitude de la suite $(E)$) où
$B=\mathbb{P}^1$ et on fait l'hypothèse (\textsc{Abélienne-Scindée}). Dans ce cas,
l'obstruction de Brauer-Manin associée au sous-groupe
$Br_{vert}(X)$ suffit,
les conclusions qu'on va montrer deviennent respectivement:
pour les zéro-cycles de degré $1$ sur $X$

(1) l'obstruction de Brauer-Manin associée au groupe $Br_{vert}(X)$ est la seule au principe de Hasse;

(2) l'obstruction de Brauer-Manin associée au groupe $Br_{vert}(X)$ est la seule à l'approximation faible;

(3) l'obstruction de Brauer-Manin associée au groupe $Br_{vert}(X)$ est la seule à l'approximation forte.

\noindent Ce cas est une généralisation du théorème 4.1 de
Colliot-Thélène/Skorobogatov/ Swinnerton-Dyer \cite{CT-Sk-SD} au sens que
$\textsf{Hil}$ est un sous-ensemble hilbertien généralisé au lieu d'un ouvert dense de $\mathbb{P}^1.$
La preuve suit la méthode utilisée dans \cite{CT-Sk-SD}, l'outil principal est l'astuce de Salberger,
à laquelle l'argument de \S \ref{Casparticulier1} est adapté.
Afin de baisser la difficulté de la lecture, au lieu de montrer ce cas directement, on le divise
en deux étapes: \S \ref{Casparticulier1} et \S \ref{Casparticulier2}, la preuve ci-dessous
ne répète pas l'argument pour la partie précédente \S \ref{Casparticulier1}.

Du même argument que \S \ref{Casparticulier1},
on se ramène à la proposition suivante, qui
joue le rôle de la proposition \ref{keyprop1}.
Dans le reste de cette section, on montre la proposition.

\begin{prop}\label{keyprop2}
Soit $\pi:X\to\mathbb{P}^1$ une fibration qui satisfait l'hypothèse

\noindent(\textsc{Abélienne-Scindée}) pour tout point $\theta$ de $\mathbb{P}^1$ de codimension $1,$
la fibre $X_\theta$ possède une composante irréductible de multiplicité un,
dans le corps des fonctions de laquelle la fermeture algébrique de $k(\theta)$ est une extension
abélienne de $k(\theta).$

Soient $\textsf{Hil}$ un sous-ensemble hilbertien généralisé de $\mathbb{P}^1$ et
$D$ un ensemble fini de points fermés de $\mathbb{P}^1.$
Supposons qu'il existe une famille
$\{z_v\}_{v\in\Omega_k}$ de zéro-cycles de $X$ de degré $1$ orthogonale au groupe
$Br_{vert}(X).$

Alors, pour tout entier strictement positif $a$ et pour tout ensemble fini $S\subset\Omega_k,$
il existe les données suivantes:

(a) pour chaque $v\in S,$ un zéro-cycle effectif $z^2_v\in Z_0(X_v)$ tel que
$z_v-z_v^2$ soit $a$-divisible dans $Z_0(X_v);$ et pour tout tel $z^2_v,$ un zéro-cycle effectif $\tau_v\in Z_0(X_v)$ tel que
$\pi_*(\tau_v)$ soit séparable à support hors de $D,$
et tel que $\tau_v$ soit suffisamment proche de $z^2_v;$

(b) un point fermé  $\theta \in\textsf{Hil}$ de degré $d\equiv1(mod\mbox{ }a)$ tel que
$\theta$ soit déployé localement partout,  tel que comme zéro-cycle $\theta$ soit suffisamment proche de $\pi_*(\tau_v)$
pour toute $v\in S,$ a fortiori $\theta$ et $\pi_*(z_v)$ sont rationnellement équivalents modulo $a,$
\textit{i.e.} ils ont la même image dans $CH_0(\mathbb{P}_v^1)/a$ pour toute $v\in\Omega_k.$
\end{prop}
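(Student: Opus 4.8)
The plan is to run the geometric construction of Proposition \ref{keyprop1} essentially verbatim for everything that concerns $\textsf{Hil}$ and the degree of $\theta$, and to graft onto it a geometric form of Salberger's trick in order to control the \emph{reduction} of $\theta$ at the non-split fibres. First I would reproduce the setup of \S\ref{Casparticulier1}: the covering $Z'\to\mathbb{P}^1$ defining $\textsf{Hil}$, its Galois closure, the groups $H\subset G$, the integral models $\mathcal{U}'\to\mathcal{V}\to\mathcal{U}$, the infinite set $I$ of places totally split in $k'$ furnished by \v{C}ebotarev, and the finite set $S_2$ coming from Lang--Weil and Hensel, now applied only to the fibres over $U$, which are smooth and geometrically integral after shrinking $U$ and enlarging $D$. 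Exactly as before I would form, for $v\in S$, the effective cycles $z_v^2$ and $\tau_v$ with $\pi_*(\tau_v)$ separable and supported away from $D$, and fix a rational point $\infty$ together with an auxiliary cycle $z_0$.

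The genuinely new input is the finite family of bad points $P_1,\dots,P_n\in D$. By the abelian-splitness hypothesis each $X_{P_i}$ carries a multiplicity-one component whose field of constants is an abelian extension $L_i/k(P_i)$, and by Lemma~3.1 of \cite{CT-Sk2000} these data generate the finite group $Br_{vert}(X)/\pi^*Br(\mathbb{P}^1)$, each class being represented by a cyclic algebra built from a character of $\mathrm{Gal}(L_i/k(P_i))$. The place where the argument of Proposition \ref{keyprop1} breaks down is the claim that a smooth geometrically integral fibre $X_\theta$ is automatically soluble over every completion: at a place $w$ of $k(\theta)$ where $\theta$ specializes onto a non-split $P_i$, the reduction acquires no smooth rational point, so $X_\theta(k(\theta)_w)$ may be empty, and the set of such $w$ depends on $\theta$, so no uniform $S_2$ governs them. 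I would therefore translate the hypothesis $\{z_v\}\perp Br_{vert}(X)$ into a reciprocity relation $\sum_v inv_v(\,\cdot\,)=0$ for each generator above, expressed through the local symbols of $\pi_*(\tau_v)$ against the characters attached to the $L_i$.

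I would then build the local functions $f_v$ as in \S\ref{Casparticulier1}, but impose, at a further finite family of auxiliary places drawn from $(\Omega_k\setminus S)\cap I$, additional local conditions making $f$ a local norm for each $L_i/k(P_i)$ at $P_i$. Since each $L_i/k(P_i)$ is abelian, the Artin symbol is additive in the local components, so the reciprocity relation coming from orthogonality lets me distribute the unavoidable defect over these split auxiliary places and cancel it, exactly as in the proof of Theorem~4.1 of \cite{CT-Sk-SD} and \S6 of \cite{Salberger}. Applying weak approximation on the linear system $|\mathcal{O}_{\mathbb{P}^1}(d\infty)|\simeq\mathbb{P}^{d-1}$ to the enlarged family $\{f_v\}$ produces a global $f$ whose zero divisor $\theta\sim d\infty$ is a single closed point of degree $d\equiv1\ (\mathrm{mod}\ a)$ lying in $U$, close to $\pi_*(\tau_v)$ for $v\in S$, and whose reduction onto every $P_i$ is now norm-compatible, hence with $X_\theta$ soluble everywhere locally. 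The \v{C}ebotarev/geometric-density argument (the analogue of Lemma \ref{E}, via \cite{Ekedahl}) is unchanged and places $\theta$ in $\textsf{Hil}$, while Lemma \ref{anydeglocalfield} and the place $v_0$ again force irreducibility and the exact degree, giving (a) and (b).

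The hard part will be to run the two density arguments simultaneously without conflict. The auxiliary places carrying the Frobenius classes of $H$ needed for $\theta\in\textsf{Hil}$ and the auxiliary split places needed to absorb the Salberger defect both live in $\Omega_k\setminus S$ and must be chosen disjoint, and the single function $f$ has to meet all four kinds of local constraint at once: approximation on $S$, the prescribed Frobenii, irreducibility at $v_0$, and norm-compatibility at the $P_i$. The consistency of this last family of constraints is not automatic; it is precisely here that $\{z_v\}\perp Br_{vert}(X)$ is indispensable, and verifying that the abelian structure makes the reciprocity relation \emph{sufficient} rather than merely necessary is the technical core, to be carried out by interpreting Salberger's trick geometrically in the manner announced for \S\ref{Casparticulier2}.
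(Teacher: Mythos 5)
Your overall architecture does match the paper's: rerun the construction of Proposition \ref{keyprop1} for the Hilbertian, degree and approximation constraints, and use $\{z_v\}\perp Br_{vert}(X)$ together with (\textsc{Ab\'elienne-Scind\'ee}) to rescue solubility at the bad reductions. But the mechanism you propose for that second step has a genuine gap. The places of $k(\theta)$ where solubility is in doubt are those where the Zariski closure $\tilde{\theta}\subset\mathcal{P}^1$ meets the closures $T_i$ of the bad points $P_i$; these places are not known in advance, lie outside $S\cup I\cup\{v_c\}\cup\{v_0\}$, and are therefore untouched by every local condition you impose on $f$. Imposing ``norm conditions'' at auxiliary places drawn from $(\Omega_k\setminus S)\cap I$ cannot help: above places totally split in the relevant extensions, the algebras built from the characters of $Gal(K_{i,j}/k_i)$ have identically zero invariant, so no ``defect'' can be distributed to, or cancelled at, such places. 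Without further input, global reciprocity yields only one linear relation per $(i,j)$ among the possibly many unknown local invariants at the places where $\tilde{\theta}$ meets $T_i$: two such places could carry invariants $+1/p$ and $-1/p$, the relation would hold, and neither reduction would be geometrically integral, so local solubility would remain unproven. You yourself flag this (``verifying that the abelian structure makes the reciprocity relation sufficient rather than merely necessary is the technical core''), but that core is exactly what is left undone.

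What is missing is the actual content of Salberger's trick as the paper implements it: prescribe the values $f(P_i)=\rho_i\in k_i^*$ \emph{globally}, where $\rho_i$ is produced by the generalized Dirichlet theorem (\cite{Sansuc82}, Cor.\ 4.4) so as to approximate $f_v(P_i)$ for $v\in S\cup\{v_c\}\cup\{v_0\}$, to be a unit outside $(S\cup I)\otimes_kk_i$ except at a single place $w_i$, and to satisfy $w_i(\rho_i)=1$; Riemann--Roch lets one impose $f(P_i)=\rho_i$ on top of the approximation conditions. This funnels the whole uncontrolled intersection $\tilde{\theta}\cap T_i$ into one transversal point $w_i$, so the reciprocity relation collapses to the vanishing of the single invariant at $w_i$; since $g'_i(\theta)$ is a uniformizer there, this forces each cyclic $K_{i,j}/k_i$, hence the compositum $K_i/k_i$, to split at $w_i$, and then parts (c) and (b) of \cite[Lemme 1.2]{CT-Sk-SD} give a $k(\theta)_{w_i}$-point. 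Two further points you elide: the reciprocity relation itself requires Harari's formal lemma (the $A_{i,j}$ are ramified at $D$, hence not in $Br(X)$, so orthogonality to $Br_{vert}(X)$ alone gives nothing about them, and one must enlarge $S$ and modify the $z_v$ at the new places); and $I$ must be \emph{redefined} as the set of places split in an extension $M'$ containing $k'$ and all the $k_i$, $K_{i,j}$ (via part (d) of the same lemma), with both the Frobenius places $v_c$ and the place $v_0$ drawn from this new $I$ --- with your old $I$ (split in $k'$ only), solubility of $X_\theta$ above $v_c$ and $v_0$ is itself unproven, since Lang--Weil no longer applies uniformly once geometrically non-integral fibres are allowed.
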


\begin{proof}
On considère la fibration $\pi:X\to\mathbb{P}^1.$ Soit $U$ un ouvert dense de $\mathbb{P}^1$ tel que toute fibre $X_\theta$ au-dessus
d'un point $\theta\in U$ est lisse et géométriquement intègre.
On note $D_0$ l'ensemble des points au-dessus desquels les fibres sont non lisses ou géométriquement non
intègres,
on écrit $D_0=\{P_1,\ldots,P_i,\ldots,P_n\}$ où $P_i$ est un point fermé de $\mathbb{P}^1$ de corps résiduel $k_i=k(P_i).$
On choisit un $k$-point de $\mathbb{P}^1\setminus D_0$ noté par $\infty$ tel que la fibre $X_\infty$ soit lisse et
géométriquement intègre, alors $D_0\subset\mathbb{A}^1=\mathbb{P}^1\setminus\infty,$
quitte à restreindre $U,$ on peut supposer que $U\subset\mathbb{A}^1.$
Chaque point fermé $P_i$ donne un point $k_i$-rationnel $e_i\in k_i=\mathbb{A}^1(k_i),$
on note $g'_i=t-e_i\in k_i(t)=k_i(\mathbb{P}^1)$ et $g_i=N_{k_i(\mathbb{P}^1)/k(\mathbb{P}^1)}(g'_i)\in k(t)=k(\mathbb{P}^1).$
Le point $P_i$ est localement défini par $g_i.$

Soient $\mathcal{P}^1,$ $\mathcal{X},$ et $\mathcal{U}$ des modèles entiers lisses
sur $Spec(O_{k,S})$ de $\mathbb{P}^1$, $X,$
et $U,$ pour un sous-ensemble fini $S\subset\Omega_k$ suffisamment grand tel qu'il existe un $O_{k,S}$-morphisme
projectif $\Pi:\mathcal{X}\to\mathcal{P}^1$ dont la fibre générique au-dessus de $Spec(k)$ est $\pi.$
De plus, on peut supposer que $g_i\in O_{k,S}[\mathbb{A}^1].$
On note, pour tout $1\leqslant i\leqslant n,$ $T_i$ l'adhérence de Zariski
de $P_i$ dans $\mathcal{P}^1,$ c'est aussi l'adhérence de Zariski dans $\mathcal{P}^1$ du sous-schéma fermé de
$\mathbb{A}^1_{O_{k,S}}$ défini par $g_i=0.$ Quitte à augmenter $S,$ on peut supposer que les points schématiques de $\mathcal{P}^1$,
au-dessus desquels les fibres de $\Pi$ sont géométriquement non intègres, sont tous contenus dans $T=\bigcup T_i,$
que $T_i\cap T_j=\emptyset$ si $i\neq j,$ et que $T_i$ est étale sur $Spec(O_{k,S}).$

Comme la fibration $X\to\mathbb{P}^1$ vérifie l'hypothèse (\textsc{Abélienne-Scindée}), on fixe
une composante irréductible $Z_i$ de multiplicité un de la fibre
$X_{P_i}.$ La fermeture algébrique $K_i$ de $k_i$ dans le corps
des fonctions de $Z_i$ est une extension abélienne de $k_i.$ On
peut écrire $K_i/k_i$ comme un composé d'un nombre fini
d'extensions cycliques $K_{i,j}/k_i.$

Comme dans la preuve de la proposition \ref{keyprop1}, lorsqu'on a la fibration $X\to\mathbb{P}^1$
et le sous-ensemble hilbertien généralisé
$\textsf{Hil},$ on choisit un revêtement étale fini galoisien $U'\to U\subset\mathbb{P}^1$ et un modèle entier
$\mathcal{U}'\to\mathcal{U}$ au-dessus de $O_{k,S_1}$ qui se factorise à travers
$\mathcal{V}$ un modèle entier de $U_{k'},$ où $k'$ est une extension finie de
$k$ qui ne dépend que  de $\textsf{Hil}.$
Comme il y a des fibres géométriquement non intègres, on ne peut pas appliquer directement
l'estimation de Lang-Weil, l'ensemble $S_2$ dans la preuve de la proposition \ref{keyprop1} n'existe plus.

Quitte à augmenter $S,$ on peut supposer que $S\supset S_1\cup \Omega^\infty.$
On va redéfinir l'ensemble $I\subset\Omega_k$ qui joue un rôle crucial dans la preuve de la proposition \ref{keyprop1}.

On fixe un caractère primitif $\chi$ du groupe cyclique $Gal(K_{i,j}(\mathbb{P}^1)/k_i(\mathbb{P}^1))=Gal(K_{i,j}/k_i),$
l'élément $(\chi,g'_i)$ du groupe de Brauer $Br(k_i(\mathbb{P}^1))$ est noté simplement par
$(K_{i,j}/k_i,g'_i),$ on note $A_{i,j}=cores_{k_i(\mathbb{P}^1)/k(\mathbb{P}^1)}(K_{i,j}/k_i,g'_i)\in Br(k(\mathbb{P}^1)),$
pour la construction de ces éléments \textit{cf.} \cite{CT-SD} \S 1.1.

Alors $A_{i,j}\in Br(\mathbb{P}^1\setminus D)$ pour tout $i,j,$ où $D$ est un certain ensemble fini de
points fermés de $\mathbb{P}^1$ contenant $D_0.$
Quitte à remplacer $a$ par un multiple, on peut supposer que $a$ annule tous les $A_{i,j}$ et que $a$ est
un multiple de $[k':k].$

On choisit un zéro-cycle effectif global $z_0\in Z_0(X)$ de degré $d_0>0$ tel que $y_0=\pi_*(z_0)$ soit à support hors de
$D\cup\{\infty\}.$

On part d'une famille $\{z_v\}_{v\in\Omega_k}\bot Br_{vert}(X),$
on peut supposer que les $z_v$ sont supportés hors des fibres au-dessus de $D$ d'après le lemme 3.1 de
\cite{Liang1}. Le lemme formel de Harari (\textit{cf.} 2.6.1 de \cite{Harari}; pour la version pour les zéro-cycles,
\textit{cf.} \cite{CT-Sk-SD}, Lemme 4.5) dit, en modifiant $z_v$ pour $v\in S'\setminus S$ si nécessaire, qu'il
existe un sous-ensemble fini $S'\subset\Omega_k$ contenant $S$  tel que
$$\sum_{v\in S'} \langle A_{i,j},\pi_*(z_v) \rangle _v=0.$$

Pour simplifier les notations, on suppose que $S'=S$
et on approxime les zéro-cycles locaux pour $v\in S'=S.$

On utilise le procède de la preuve de la proposition \ref{keyprop1},
pour chaque $v\in S$ on trouve des zéro-cycles effectifs
$z_v^2,\tau_v\in Z_0(X_v)$ de degré assez grand $d\equiv1(mod\mbox{ }ad_0)$
tel que $z_v^2-z_v$ soit $a$-divisible dans $Z_0(X_v),$ tel que $\pi_*(\tau_v)$ soit séparable à support
hors de $D\cup supp(y_0)\cup\{\infty\},$
et tel que $\tau_v$ soit suffisamment proche de $z^2_v.$

On rappelle le lemme suivant.

\begin{lem}[\cite{CT-Sk-SD}, Lemme 1.2]\label{key lemma for loc split}
Soit $k$ un corps de nombres, on note $Spec(O)$ un ouvert non-vide de $Spec(O_k).$ Soit $\Pi:\mathcal{X}\To \mathcal{P}^1$ un morphisme
plat, projectif avec $\mathcal{X}$ régulier et lisse sur $O.$ On note $\pi:X\to\mathbb{P}^1$ la restriction à la fibre
générique de $\Pi.$ Soit $T\subset \mathcal{P}^1$ un sous-schéma fermé fini étale sur $O$ tel que les fibres de $\Pi$ au-dessus
des points qui ne sont pas dans $T$ sont géométriquement intègres.
Soit $T=\bigcup T_i$ la décomposition des composantes irréductibles, soit $k_i$
le corps des fonctions de $T_i.$

Quitte à restreindre l'ouvert $Spec(O)\subset Spec(O_k),$ on a les assertions suivantes.

(a) Étant donné un point fermé $u$ de $\mathcal{P}^1,$
si la fibre $\mathcal{X}_u$ sur le corps fini $k(u)$ est géométriquement intègre, alors
elle contient un $k(u)$-point lisse.

(b) Étant donné un point fermé $\theta$ de $\mathbb{P}^1$ avec son adhérence de
Zariski $Spec(\tilde{O})\simeq \tilde{\theta}\subset\mathcal{P}^1,$
où $\tilde{O}/O$ est fini avec $Frac(\tilde{O})=k(\theta),$
on note $\tilde{O}'$ la clôture intégale de $\tilde{O}$ dans $k(\theta).$
Si $u\in \tilde{\theta}\subset\mathcal{P}^1$ est un point fermé tel
que $\mathcal{X}_u/k(u)$ soit géométriquement intègre,
alors $X_\theta$ contient un
$k(\theta)_v$-point lisse où $v$ est une place de $k(\theta)$
(associée à un point fermé de $Spec(\tilde{O}')$) au-dessus de $u.$

(c) Soit $u$ dans un des $T_i,$ il définit alors une place $v_i$ de $k_i.$
Supposons qu'il existe une composante irréductible $Z$ de la fibre
de $f$ en $T_i\times_Ok=Spec(k_i)$ de multiplicité un. On note $K_i$ la clôture algébrique de $k_i$ dans le corps des fonctions
de $Z.$ Si la place $v_i$ décompose totalement dans l'anneau des entiers $O_i\subset K_i,$ alors $\mathcal{X}_u/k(u)$
est géométriquement intègre.

(d) On suppose que, pour chaque $i,$ il existe au moins une composante
irréductible de $f^{-1}(Spec(k_i))$ de multiplicité un. Alors, étant
donné $M/k$ une extension finie, il existe une extension finie $M'$ de $M,$
telle que pour presque toute place $v\in\Omega_k$
décomposant totalement dans $M'$ on ait l'assertion suivante:

L'application $f:X(L)\To \mathbb{P}^1(L)$ est surjective pour toute extension finie $L$ de $k_v$ .
\end{lem}

\begin{proof}
Voir le lemme 1.2 de Colliot-Thélène/Skorobogatov/Swinnerton-Dyer \cite{CT-Sk-SD} pour la démonstration,
c'est une version pour les points fermés,
comme indiqué dans la page 20 de \cite{CT-Sk-SD} la même preuve fonctionne dans ce cas.

En comparant avec \cite[Lemme 1.2]{CT-Sk-SD}, ici on remplace
\og scindée\fg\mbox{ }(\emph{i.e.} la fibre contient une composante irréductible de multiplicité un qui est géométriquement
intègre) par \og géométriquement intègre\fg\mbox{ }simultanément dans l'hypothèse et dans les conclusions,
la même preuve reste valable.
\end{proof}

On fixe une extension (non-triviale) finie galoisienne $M$ de $k$ qui contient tous les $k_i$ et $K_{i,j},$
d'après le lemme \ref{key lemma for loc split} (d),
il existe une extension finie $M'\supset M\cdot k'$ de $k$
et $I$ un sous-ensemble infini de $\Omega_k$ en dehors de $S$ contenant presque toutes
les places qui sont décomposées totalement dans $M'.$
On a alors, pour toute extension finie $L/k_v$ ($v\in I$), l'application $X(L)\To \mathbb{P}^1(L)$ est surjective,
les places dans $I$ sont décomposées totalement dans $M$ et
dans $k'.$

On fixe l'application (Lemme \ref{E})
$$\gamma:\textbf{E}\To((\Omega_k\setminus S)\cap I)\otimes_kk'$$ comme dans la preuve de la
proposition \ref{keyprop1} et on fixe une place
$v_0\in I\setminus S\setminus\{v_c,c\in\textbf{E}\}$
(pas simplement dans $\Omega_k\setminus S\setminus\{v_c,c\in\textbf{E}\}$).
On a ensuite les $f_v\in k_v(\mathbb{P}^1)^*/k^*_v$ pour toute $v\in S\cup\{v_c,c\in\textbf{E}\}\cup\{v_0\}$
décrites comme suit.

Pour chaque $c\in \textbf{E},$ on trouve, comme dans la preuve de la proposition \ref{keyprop1}, un zéro-cycle
$x_c+x'_c$ séparable effectif de degré $d$ à support hors de $D\cup supp(y_0)\cup\{\infty\}.$
De même, pour $v_0,$ on trouve un point fermé
$x_{v_0}$ de degré $d$ en dehors de $D\cup supp(y_0)\cup\{\infty\}.$
On écrit  $div_{\mathbb{P}^1_v}(f_v)=\pi_*(\tau_v)-d\infty$ pour $v\in S,$
$div_{\mathbb{P}^1_{v_c}}(f_{v_c})=(x_c+x'_c)-d\infty$ pour $c\in \textbf{E},$ et
$div_{\mathbb{P}^1_{v_0}}(f_{v_0})=x_{v_0}-d\infty$ pour $v=v_0.$

Pour toute $v\in S\cup\{v_c,c\in\textbf{E}\}\cup\{v_0\},$ on pose
$\rho_{i,v}=f_v(P_i)\in k_{i,v}^*=(k_i\otimes_kk_v)^*.$
Puisque $I\setminus\{v_c,c\in\textbf{E}\}\setminus\{v_0\}$ est infini,
le théorème de Dirichlet généralisé (\cite{Sansuc82}, Corollaire 4.4) permet de trouver, pour chaque $i,$ un élément
$\rho_i\in k^*_i$ qui soit suffisamment proche de $\rho_{i,v}\in (k_i\otimes_kk_v)^*$ pour
$v\in S\cup\{v_c,c\in\textbf{E}\}\cup\{v_0\}$ et qui soit une unité en dehors de
$(S\cup I)\otimes_kk_i\sqcup\{w_i\},$ où $w_i$ est une place finie de $k_i$ en dehors de $(S\cup I)\otimes_kk_i$ telle que
de plus $w_i(\rho_i)=1.$

Comme $d$ peut être choisi assez grand, d'après le théorème de Riemann-Roch (pour les détails \textit{cf.} les preuves
des lemmes 5.1 et 5.2 de Colliot-Thélène \cite{CT99}), on obtient une fonction
$f\in O_{k,S\cup I}[\mathbb{A}^1]$ telle que $f$ soit suffisamment proche de $f_v$
pour toute $v\in S\cup\{v_c,c\in\textbf{E}\}\cup\{v_0\},$
et telle que $f(P_i)=\rho_i$ pour tout $i.$
En écrivant $div_{\mathbb{P}^1}(f)={\theta}-d\infty,$
on obtient un zéro-cycle effectif $\theta,$ de plus ${\theta}$ est un point fermé
dans $U$ hors de $D\cup supp(y_0)\cup \{\infty\}$ car il est suffisamment proche de $x_{v_0}.$
Puisque le zéro-cycle ${\theta}$ est suffisamment proche de $x_c+x'_c$ pour $c\in \textbf{E},$
le point fermé $\theta\in \textsf{Hil}$ d'après le même argument de la proposition
\ref{keyprop1} (ici on utilise le fait que $d$ et $[k':k]$ sont premiers entre eux).
Le zéro-cycle ${\theta}$ est aussi suffisamment proche de $\pi_*(\tau_v)$ pour $v\in S,$

Il reste à vérifier que ${\theta}$ est déployé localement partout. On suit
principalement l'idée de Colliot-Thélène/Skorobogatov/Swinnerton-Dyer \cite{CT-Sk-SD}.

Pour $w\in \Omega_{k({\theta})}$ au-dessus de $v\in \Omega_k:$

Si $v\in S,$ le théorème des fonctions implicites implique que ${\theta}_v$ est déployé.

Si $v\in I,$ le lemme \ref{key lemma for loc split} (d) implique que $X(k({\theta})_w)\twoheadrightarrow \mathbb{P}^1(k({\theta})_w)$
pour toute $w$ au-dessus de $v.$

Si $v\notin S\cup I,$ on note $\tilde{{\theta}}\simeq Spec(A)$ l'adhérence de Zariski de ${\theta}$ dans $\mathcal{P}^1,$
où $A/O_{k,S}$ est fini avec $Frac(A)=k({\theta}),$ on sait alors que
$O_{k({\theta}),S}$ est la clôture intégale de $A$ dans $k({\theta}).$
On fixe une place $w$ de $k({\theta})$
au-dessus de $v,$ il définit un point fermé $w\in Spec(O_{k({\theta}),S}),$ ce point se trouve
au-dessus d'un certain point fermé $w_{\theta}\in\tilde{{\theta}}.$
On remarque
que $\tilde{{\theta}}$ et $T_i$ sont définis localement par $f$ et $g_i$ respectivement.
Il y a deux cas possibles.

(i) Si $w_{\theta}$ est contenue dans un (et alors un seul) des $T_i.$
On sait que $g_i({\theta})\in k({\theta})$ et $\rho_i=f(P_i)\in O_{k_i,S\cup I}.$
On rappelle que pour $w'\in \Omega_{k_i}\setminus(S\cup I)\otimes_kk_i,$
$w'(\rho_i)=1$ si $w'=w_i,$ $w'(\rho_i)=0$ si $w'\neq w_i.$
Donc, en restreignant au-dessus de $\Omega_k\setminus(S\cup I)\subset Spec(O_{k,S}),$ l'intersection
$T_i\cap\tilde{{\theta}}$ ne contient qu'un point
noté par $w_i\in T_i,$ et le multiplicité d'intersection
de $T_i$ et $\tilde{{\theta}}$ en $w_i$ égale $1$ car $w_i(\rho_i)=1.$ Alors $w_i,$ vu comme un
point fermé $w_{\theta}$ de $\tilde{{\theta}},$ doit être un point régulier de $\tilde{{\theta}}.$ On a alors $w=w_{\theta}=w_i,$
$k_{iw_i}=k({\theta})_w$ et $w(g_i({\theta}))=w_i(\rho_i)=1.$

(ii) Si $w_{\theta}\notin T_i$ pour tout $i,$ alors $\mathcal{X}_{w_{\theta}}/k(w_{\theta})$
est géométriquement intègre par la construction de $T_i,$ on a donc
$X_{\theta}(k({\theta})_w)\neq\emptyset$ d'après le lemme \ref{key lemma for loc split}(b).
On sait que $g_i({\theta})$ est une unité (modulo $w_{\theta}$) dans
$k(w_{\theta})\subset k(w)$ car $g_i({\theta})\notin T_i\cap\tilde{{\theta}},$ donc $w(g_i({\theta}))=0.$

Pour vérifier que ${\theta}$ est déployé localement partout, il reste la place $w$ dans le cas (i) où $w=w_i\in T_i.$
On note $E_i=k_i\otimes_kk({\theta}),$ $F_{i,j}=K_{i,j}\otimes_kk({\theta}).$
On a
$\langle A_{i,j},{\theta} \rangle _{\mathbb{P}^1}=cores_{k({\theta})/k}cores_{E_i/k({\theta})}(F_{i,j}/E_i,g'_i({\theta}))\in Br(k)$
par définition.

On rappelle que
$$\sum_{v\in S} \langle A_{i,j},\pi_*(z_v) \rangle _v=0$$ et
$ \langle A_{i,j},\pi_*(z_v) \rangle _v= \langle A_{i,j},\pi_*(z^2_v) \rangle _v$ pour toute
$v\in S.$
Alors
$$\sum_{v\in S} \langle A_{i,j},\pi_*(\tau_v) \rangle _v=\sum_{v\in S} \langle \pi^*(A_{i,j}),\tau_v \rangle _v=\sum_{v\in S} \langle \pi^*(A_{i,j}),z^2_v \rangle _v=\sum_{v\in S} \langle A_{i,j},\pi_*(z^2_v) \rangle _v=0,$$
donc $$\sum_{v\in S} \langle A_{i,j},{\theta} \rangle _v=0$$
par continuité de l'accouplement de Brauer-Manin
(pour $v\in S,$ ${\theta}$ est suffisamment proche de $\pi_*(\tau_v),$ $\tau_v$ est suffisamment proche de
$z_v^2,$ on remarque que $z_v-z^2_v$ est $a$-divisible, et $a$ annule $A_{i,j}$).

On a donc $$\sum_{v\in \Omega_k\setminus S} \langle A_{i,j},{\theta} \rangle _v=0$$ car ${\theta}$ est global.
Donc
$$\sum_{v\in \Omega_k\setminus S}inv_v(cores_{k({\theta})/k}cores_{E_i/k({\theta})}(F_{i,j}/E_i,g'_i({\theta})))=0,$$
$$\sum_{w\in \Omega_{k({\theta})}\setminus S\otimes_kk({\theta})}inv_w(cores_{E_i/k({\theta})}(F_{i,j}/E_i,g'_i({\theta})))=0.$$

À partir de maintenant on suppose que  $w\in(\Omega_k\setminus S)\otimes_kk(\theta).$

Si $w\in I\otimes_kk(\theta),$ soit $v$ la place de $k$ au-dessous de $w,$
par construction l'extension des corps locaux associés aux $K_{i,j}/k_i$
au-dessus de $v$ est triviale, l'extension $F_{i,j}/E_i$ est alors triviale au-dessus de $w,$
on trouve $$inv_w(cores_{E_i/k({\theta})}(F_{i,j}/E_i,g'_i({\theta})))=0.$$

Si $w\notin I\otimes_kk(\theta)$ et $w\neq w_i$ (plus précisément, le point $w_{\theta}$ associé à $w$ n'est pas dans $T_i$),
on rappelle que dans ce cas $w(g_i(\theta))=0,$ alors $g_i({\theta})$ est une unité en $w,$ d'où
$inv_w(cores_{E_i/k({\theta})}(F_{i,j}/E_i,g'_i({\theta})))=0.$

On trouve finalement $$(\star)\mbox{ }inv_{w_i}(cores_{E_i/k({\theta})}(F_{i,j}/E_i,g'_i({\theta})))=0.$$

On considère la flèche  $E_i\To E_i\otimes_{k({\theta})}k({\theta})_{w_i},$ où $E_i\otimes_{k({\theta})}k({\theta})_{w_i}$
est un produit d'extensions de $k({\theta})_{w_i}.$
En remarquant que $w(N_{E_i/k({\theta})}(g'_i({\theta})))=w(g_i({\theta}))$ égale soit $0$ si $w\neq w_i$
soit $1$ si $w=w_i,$ il y a donc seulement une de ces extensions, notée par $E_{i,w},$
dans laquelle l'image de $g'_i({\theta})$ n'est pas
une unité mais une uniformisante, de plus, $E_{i,w}/k({\theta})_{w_i}$ est triviale.
L'égalité ($\star$) implique que $(F_{i,j}/E_i,g'_i({\theta}))\otimes_{E_i}E_{i,w}=0,$
on a alors pour tout $j$ l'extension cyclique $K_{i,j}/k_i$ est triviale après $\otimes_{E_i}E_{i,w}$
car $g'_i({\theta})$ est une uniformisante de $E_{i,w},$ on trouve que
$K_i/k_i$ est triviale après $\otimes_{E_i}E_{i,w}.$
D'après le lemme \ref{key lemma for loc split}(c), la réduction $\mathcal{X}_{w_i}/k(w_i)$
de $X_\theta$ modulo $w_i$ est géométriquement intègre,
$X_{\theta}$ contient donc un $k({\theta})_{w_i}$-point d'après
le lemme \ref{key lemma for loc split}(b).
\end{proof}


\section{Cas général}\label{Casgeneral}


\subsection{Fibrations au-dessus d'une courbe de genre quelconque}\label{Cas1}

Dans cette sous-section, on considère le théorème principal (sauf l'exactitude de la suite $(E)$) pour le cas où
$B=C$ est une courbe lisse de groupe $\sha(Jac(C))$ fini. Dans ce cas, l'obstruction de Brauer-Manin associée au sous-groupe
$Br_{vert}(X)$ suffit, \emph{i.e.}
les conclusions deviennent respectivement:
pour les zéro-cycles de degré $1$ sur $X$

(1) l'obstruction de Brauer-Manin associée au groupe $Br_{vert}(X)$ est la seule au principe de Hasse;

(2) l'obstruction de Brauer-Manin associée au groupe $Br_{vert}(X)$ est la seule à l'approximation faible;

(3) l'obstruction de Brauer-Manin associée au groupe $Br_{vert}(X)$ est la seule à l'approximation forte.

\noindent Ce cas est une généralisation des théorèmes principaux 1.3 et 1.4 de Wittenberg
\cite{Wittenberg} au sens que
$\textsf{Hil}$ est un sous-ensemble hilbertien généralisé au lieu d'un ouvert dense de $C.$

Dans la section \S \ref{suiteE-C}, on va expliquer comment établir la suite $(E)$ pour $X,$
d'où la conclusion (3) est automatiquement vérifiée (\emph{cf.} \S\ref{suiteE}), de plus la même méthode
démontre également (1) et (2).


\subsection{Fibrations au-dessus de $\mathbb{P}^n$}\label{Cas2}
Dans cette sous-section, on explique comment on peut montrer
le théorème principal (sauf l'exactitude de la suite $(E)$) dans le cas où
$B=\mathbb{P}^n$ est l'espace projectif. En répétant la stratégie de Wittenberg
\cite[Théorème 3.4]{WittenbergLNM}, le résultat a été presque établi dans \cite[Théorème 3.5]{Liang2},
la seule différence est de remplacer l'ouvert dense $U\subset\mathbb{P}^n$ par un
sous-ensemble hilbertien généralisé $\textsf{Hil}\subset\mathbb{P}^n.$
Il suffit de vérifier que
la condition liée à le sous-ensemble hilbertien généralisé se comporte bien dans la récurrence, qui est le lemme
suivant.
Ce lemme a été mentionné dans \cite{Harari3} \S 1.2 sans preuve, on présente une preuve pour le confort du lecteur.

\begin{lem}\label{lemhilbertien}
Soient $k$ un corps de nombres et $\textsf{Hil}\subset\mathbb{A}^{r+s}$ un sous-ensemble hilbertien généralisé.
Alors $p_1(Hil)$ contient un certain sous-ensemble hilbertien généralisé de $\mathbb{A}^r,$ où
$p_1:\mathbb{A}^{r+s}=\mathbb{A}^r\times\mathbb{A}^s\to\mathbb{A}^r$ est la projection canonique.
\end{lem}

\begin{proof}
On note $p_2:\mathbb{A}^{r+s}=\mathbb{A}^r\times\mathbb{A}^s\to\mathbb{A}^s$ la deuxième projection.
Soit $\textsf{Hil}$ défini par $V\buildrel\rho\over\twoheadrightarrow U\subset\mathbb{A}^{r+s}$ où $U$
est un ouvert non vide et où $\rho$ est un morphisme étale fini.
La projection $W_2=p_2(U)$ est un ouvert non vide de $\mathbb{A}^s,$ il existe un ouvert non vide $W_2'\subset W_2$
tel que pour tout point fermé $\theta_2\in W_2',$ $U_{\theta_2}=U\cap p_2^{-1}(\theta_2)$ soit lisse sur $k(\theta_2),$
la variété $V_{\theta_2}$ est alors lisse.
Comme $\mathbb{A}^{r+s}(k)\cap \textsf{Hil}$ est Zariski dense dans $\mathbb{A}^{r+s},$
\textit{cf.} \cite{Ekedahl}, on trouve que $p_2(\mathbb{A}^{r+s}(k)\cap \textsf{Hil})\cap W_2'\neq\emptyset,$
il existe alors un $k$-point $\theta_2\in p_2(\textsf{Hil})\cap W_2'.$
L'ouvert $U_{\theta_2}$ de $p_2^{-1}(\theta_2)$ est alors non vide lisse,
on définit $Z=\rho^{-1}(U_{\theta_2}),$ c'est une variété lisse.
Par construction, il existe un point fermé $\theta_1\in\mathbb{A}^r$ tel que
$(\theta_1,\theta_2)$ soit contenu dans $\textsf{Hil},$ \textit{i.e.} $\rho^{-1}(\theta_1,\theta_2)$ est connexe,
la variété $Z$ est alors aussi connexe, donc intègre. Puisque $\theta_2$ est un $k$-point, le morphisme
$Z\to U_{\theta_2}$ définit un sous-ensemble hilbertien généralisé de $\mathbb{A}^r\times\theta_2\simeq \mathbb{A}^r,$
contenu dans $p_1(\textsf{Hil}).$
\end{proof}


\section{La suite exacte $(E)$}\label{suiteE}

Dans cette section, on explique la suite $(E)$ et établit son exactitude pour les fibrations
considérées dans le théorème principal.

Tout d'abord, on note $A_0(X)=ker[deg:CH_0(X)\to\Z].$
La théorie du corps de classes global implique que l'image diagonale de $A_0(X)$ dans $\prod_{v\in\Omega_k}A_0(X_v)$ est
contenue dans le noyau de l'accouplement de Brauer-Manin. Ceci donne un complexe
$$A_0(X)\to\prod_{v\in\Omega_k} A_{0}(X_v)\to Hom(Br(X),\Q/\Z),$$
où la deuxième flèche est induite par l'accouplement de Brauer-Manin.
On considère le complexe complété
$$(E_0)\mbox{   }A_0^{\chapeau}(X)\to \prod_{v\in\Omega_k}A_0^\chapeau(X_v)\to Hom(Br(X),\Q/\Z),$$
où $\widehat{M}$ désigne $\varprojlim_{n>0}M/n$ pour un groupe abélien $M.$

L'exactitude de la suite $(E_0)$ a été conjecturée par Colliot-Thélène/Sansuc \cite{CTSansuc81},
Conjectures A, C, pour les surfaces rationnelles; par Kato/Saito \cite[\S 7]{KatoSaito86},  et par
Colliot-Thélène \cite[Conjecture 1.5]{CT95}, pour toutes les variétés lisses.

La formulation suivante est due à van Hamel \cite{vanHamel}, développée  par Wittenberg \cite{Wittenberg}.
On définit $CH_{0,\mathbb{A}}(X)$ comme le groupe
$$\prod_{v\in\Omega_k^\textmd{f}}CH_0(X_v)\times\prod_{v\in\Omega_k^\infty}Coker[N_{\bar{k}_v/k_v}:CH_0(\overline{X}_v)\to CH_0(X_v)].$$
De même, on trouve un complexe
$$(E)\mbox{   }CH_0^{\chapeau}(X)\to CH_{0,\A}^\chapeau(X)\to Hom(Br(X),\Q/\Z),$$
et on espère que $(E)$ soit exact pour toute variété lisse et géométriquement connexe $X.$
Il est remarqué par Wittenberg que
l'exactitude de $(E)$ implique l'exactitude de $(E_0),$
et implique que l'obstruction de Brauer-Manin est la seule au principe de Hasse pour les zéro-cycles de degré
$1$ sur $X,$ \textit{cf.} \cite{Wittenberg}, Remarques 1.1 (ii)(iii).
L'exactitude de $(E)$ implique aussi que l'obstruction de Brauer-Manin est la seule à l'approximation faible et
forte pour les zéro-cycles de degré $1,$ \emph{cf.} \cite[Proposition 2.2.1]{Liang4} et sa preuve.
Réciproquement, c'est possible mais ce n'est pas évident que l'approximation forte pour les
zéro-cycle de degré $1$ implique l'exactitude de $(E)$ qui concerne les zéro-cycles de tout degré.

On remarque que, pour les places réelles, le conoyau $Coker[N_{\mathbb{C}/\mathbb{R}}:CH_0(X_\mathbb{C})\to CH_0(X_\mathbb{R})]$
est calculé par Colliot-Thélène/Ischebeck \cite{CT-I}.

\bigskip

Dans le reste de cette section, on va établir l'exactitude de la suite $(E)$ pour les fibrations considérées dans le théorème
principal, on suppose que $X_\eta$ est rationnellement connexe à partir de maintenant.
Dans \S\ref{suiteE-C}, pour le cas où $B=C$ est une courbe,
on peut l'établir directement en utilisant l'argument de Wittenberg
\cite{Wittenberg}, de plus la suite $(E)$ reste exacte même si l'on remplace $Br(X)$ par $Br_{vert}(X).$
Dans \S\ref{suiteE-P}, pour le cas où $B=\mathbb{P}^n,$ l'argument utilise la conclusion (2) dans le théorème principal
qui est montrée dans \S\ref{Cas2}.

\subsection{Le cas $B=C$}\label{suiteE-C}
Dans son article \cite{Wittenberg}, Wittenberg établit l'exactitude de la suite $(E)$ pour une fibration
$X\to C$ satisfaisant (\textsc{Abélienne-Scindée}), en supposant que

- pour presque tout point fermé $\theta$ de $C,$ pour tout entier $n>0$ et tout ensemble fini
$S\subset\Omega_{k(\theta)},$ l'image de $CH_0(X_\theta)$ dans $\prod_{w\in S}CH_0((X_\theta)_w)/n$
contient l'image de $\prod_{w\in S}X_\theta(k(\theta)_w)$ dans ce même groupe.

\noindent Cette hypothèse est vérifiée si $X_\theta$ satisfait l'approximation faible pour les points rationnels
ou pour les zéro-cycles de degré $1.$

On rappelle l'idée de la preuve dans \cite{Wittenberg}.
On part d'une fibration $X\to C,$ on construit un morphisme dominant
$\psi:C\to\mathbb{P}^1$ et on considère la fibration $W'\to\mathbb{P}^1,$
où $W'$ est une compactification lisse de la restriction à la Weil $W\to\mathbb{P}^1$ de $X\to C$ le long de $\psi.$
L'exactitude de $(E)$ pour $X$ se déduit de l'exactitude de $(E)$ pour  $W',$ cette réduction a été fait
dans \S 4.3 de \cite{Wittenberg}. Afin d'établir $(E)$ pour $W',$
on applique le théorème 4.8 de \cite{Wittenberg} (une variante de \cite[théorème 4.1]{CT-Sk-SD})
à la fibration $W'\to \mathbb{P}^1.$

Dans \cite{Wittenberg}, l'hypothèse arithmétique est posée sur $X_\theta$ pour presque tout point fermé
$\theta$ de $C.$ Par contre, ici l'hypothèse arithmétique est posée seulement pour  $\theta\in \textsf{Hil}$
où $\textsf{Hil}$ est un sous-ensemble hilbertien généralisé de $C.$
Pour que le même argument fonctionne pour notre situation, il suffit d'adapter la notion
de sous-ensemble hilbertien généralisé avec cette procédure de réduction et avec \cite[Théorème 4.8]{Wittenberg}.

D'abord, on suppose que $\textsf{Hil}$ est défini par $Z\to U\subset C,$ son composé avec
$\psi$ définit alors un sous-ensemble hilbertien généralisé $\textsf{Hil}_1$ de $\mathbb{P}^1$ tel que
pour tout $\theta_1\in \textsf{Hil}_1$ on ait automatiquement $\theta=\psi^{-1}(\theta_1)\in \textsf{Hil}.$
Donc les sous-ensembles hilbertiens généralisés
se comporte bien avec la réduction dans \S 4.3 de \cite{Wittenberg}.

Ensuite, on considère le théorème 4.8 de \cite{Wittenberg}, avec l'hypothèse arithmétique
supposée seulement sur un sous-ensemble hilbertien généralisé $\textsf{Hil}_1\subset\mathbb{P}^1.$
Remarquons que l'assertion dans \S \ref{Casparticulier2} généralise \cite[théorème 4.1]{CT-Sk-SD}
et de plus l'argument fonctionne également pour $\{z_v\}_{v\in\Omega_k}$
de degré premier à un nombre entier bien choisi, ceci confirme la conclusion de \cite[Théorème 4.8]{Wittenberg}
sous l'hypothèse sur $\textsf{Hil}_1.$

Enfin, on remarque que, dans tout cet argument, on utilise seulement le sous-groupe
$Br_{vert}(X)$ de $Br(X).$

\subsection{Le cas $B=\mathbb{P}^n$}\label{suiteE-P}
Puisque le cas $n=1$ est contenu dans \S \ref{suiteE-C}, on suppose que $n>1.$
Comme expliqué dans \cite[Théorème 3.5]{Liang2}, on applique la stratégie
de Wittenberg dans la preuve du \cite[Théorème 3.4]{WittenbergLNM} qu'on rappelle comme suit.
On part d'une fibration $X\to\mathbb{P}^n,$
la variété $X$ est vue comme une fibration $X\to\mathbb{P}^{n-1}$ à fibres géométriquement
intègres. De plus, l'obstruction de Brauer-Manin est la seule pour les zéro-cycles de degré
$1$ sur les fibres $X_\theta$ si $\theta$ est contenu dans un certain sous-ensemble
hilbertien généralisé de $\mathbb{P}^{n-1}.$
En appliquant le théorème 3.5 de \cite{Liang2}, on conclut que l'obstruction de Brauer-Manin
est la seule à l'approximation faible pour les zéro-cycles de degré $1.$ En plus,
sans modifier ni les hypothèses ni les arguments, la même
conclusion est valable pour les zéro-cycles de degré $\delta$ quelconque sur $X.$

D'après Wittenberg \cite[Prop 3.1]{Wittenberg}, afin d'établir $(E)$ pour
une fibration au-dessus d'une courbe, il suffit de vérifier une propriété $(P_S)$
pour tout ensemble fini $S\subset\Omega_k.$ On remarque que la même conclusion reste valable
si l'on remplace la base par l'espace projectif. Tous les arguments de Wittenberg fonctionnent.
En plus, la propriété $(P_S)$ et les arguments deviennent plus simples car
l'application induite $CH_0(X)\to CH_0(\mathbb{P}^{n-1})=\mathbb{Z}$ est simplement l'application de
degré.

Il reste à vérifier la propriété pour $X\to\mathbb{P}^{n-1}:$
\begin{enumerate}
\item[$(P_S)$]
Soit $\{z_v\}_{v\in\Omega_k}$ une famille de zéro-cycles de degré $\delta.$ Si elle est orthogonale à
$Br(X),$ alors pour tout entier $n>0,$  il existe un zéro-cycle $z$ de $X$ de degré $\delta,$ tel que
pour toute $v\in S$ on ait $z=z_v$ dans $CH_0(X_v)/n$ si $v$ est finie et
$z=z_v+N_{\bar{k}_v/k_v}(u_v)$ dans $CH_0(X_v)$ pour un $u_v\in CH_0(\overline{X_v})$
si $v$ est réelle.
\end{enumerate}
Cette propriété est impliquée par la propriété suivante.
\begin{enumerate}
\item[$(P'_S)$]
Soit $\{z_v\}_{v\in\Omega_k}$ une famille de zéro-cycles de degré $\delta.$ Si elle est orthogonale à
$Br(X),$ alors pour tout entier $n>0,$  il existe un zéro-cycle $z$ de $X$ de degré $\delta,$ tel que
pour toute $v\in S$ on ait $z=z_v$ dans $CH_0(X_v)/2n.$
\end{enumerate}
\noindent Cette dernière propriété est exactement l'assertion que l'obstruction
de Brauer-Manin est la seule à l'approximation faible pour les zéro-cycles
de degré $\delta,$ qui est vérifiée par $X.$
Ceci établit l'exactitude de $(E)$ pour le cas $B=\mathbb{P}^n$ dans le théorème principal.


\section{Fibré en surfaces de Châtelet}\label{Fibreenchatelet}

La notion de sous-ensemble hilbertien généralisé nous permet d'appliquer le résultat principal à certaines
fibrations, par exemple, les fibrations en surfaces de Châtelet (ou encore plus général) au-dessus d'une courbe
sous l'hypothèse (\textsc{Abélienne-Scindée}).

D'abord, on rappelle la notion de $p$-fold de Châtelet.
Soient $K$ un corps et $L$ une extension finie de degré $n.$
On fixe une $K$-base linéaire $w_1,\ldots,w_n$ de $L.$
Soit $P(z)\in K[z]$ un polynôme de degré $dn$ où $d>0$ est un entier.
L'équation normique
$$N_{L/K}(x_1w_1+\cdots+x_nw_n)=P(z)$$ est un polynôme dans $K[x_1,\ldots,x_n,z],$
elle définit une variété lisse et géométriquement intègre dans $\mathbb{A}^{n+1}.$
Il existe un modèle projectif lisse $X=X_{L/K,P(z)}$ de cette variété (\cite{anthony-bianca}, Proposition 2.1).
Un tel modèle est appelé un $p$-fold de Châtelet si $L/K$ est une extension cyclique de degré premier $p$ et si $d=2,$
c'est une surface de Châtelet si $p=2.$
Si $K$ est un corps de nombres,
la variété $X$ vue comme un fibré en coniques au-dessus de $\mathbb{P}^1$ via l'indéterminée $z,$
vérifie que l'obstruction de Brauer-Manin est la seule au principe de Hasse/à l'approximation faible pour
les zéro-cycles de degré $1$
(Théorème principal, ou \cite[Théorème 1.3]{Wittenberg}).

\textbf{Fait.} Si $K$ est un corps de nombres,
le groupe de Brauer $Br(X)$ est égal à $im[Br(K)\to Br(X)]$ lorsque $P(z)$ est un polynôme irréductible sur $K.$
Par conséquent, le principe de Hasse et l'approximation faible pour les zéro-cycles de degré $1$ sont valables pour
$X=X_{L/K,P(z)}.$

Ce fait se déduit de la proposition 2.5 de Colliot-Thélène/Harari/Skorobogatov \cite{CTHSk}, voir aussi
le corollaire 3.3 de V\'arilly-Alvarado/Viray \cite{anthony-bianca}.
Récemment, Wei obtient un résultat plus général, une fois que
$L/K$ est une extension cyclique (pas nécessairement de degré premier) et $P(z)$ est
irréductible sur $K$ (de degré quelconque), on a encore le fait ci-dessus, \textit{cf.} \cite{Wei}, Théorème 1.4.

Soit $k$ un corps de nombres.
On considère une fibration au-dessus d'une courbe $\pi:X\to C,$ dont
la fibre générique est définie par l'équation
$N_{L/K}(x_1w_1+\cdots+x_pw_p)=P(z),$ où $P(z)\in K[z]$ est un polynôme supposé irréductible
sur $K=k(C)$ et où $L$ est une extension cyclique de $K.$
Le polynôme $P(z)$ définit une extension finie $K'$ de $K,$ à laquelle il associe une courbe
normale $Z$ et un morphisme fini dominant $Z\to C.$ Ce morphisme est étale au-dessus d'un ouvert non vide, il
définit alors un sous-ensemble hilbertien généralisé $\textsf{Hil}\subset C$ tel que pour tout $\theta\in \textsf{Hil}$
la fibre $X_\theta$ vérifie le principe de Hasse et l'approximation faible pour les zéro-cycles de degré $1$
(le fait ci-dessus).
D'après le théorème principal,
si $\sha(Jac(C))$ est fini et si $\pi$ vérifie (\textsc{Abélienne-Scindée}), l'obstruction
de Brauer-Manin est la seule au principe de Hasse et à l'approximation faible pour les zéro-cycles de degré
$1$ sur $X,$ la suite $(E)$ est exacte pour $X$ (avec la finitude de $\sha(Jac(C))$ supposée).

Il reste à vérifier (\textsc{Abélienne-Scindée}). Dans le cas particulier où
le terme gauche $$N_{L/k(C)}(x_1w_1+\cdots+x_pw_p)$$ ne varie pas le long de $C,$ on peut vérifier directement
l'hypothèse (\textsc{Abélienne-Scindée}). Plus précisément, dans ce cas $L=l(C)$ où $l/k$ est une extension cyclique
de degré $p,$ et $w_1,\ldots,w_p$ est une $k$-base linéaire de $l$ (donc c'est une $K$-base linéaire
de $L$ car $C$ est une courbe géométriquement intègre sur $k$). Soit $\theta$ un point fermé
de $C,$ la fibre $X_\theta,$ définie par $N_{l\otimes_kk(\theta)/k(\theta)}(x_1w_1+\cdots+x_pw_p)=P_\theta(z),$

- est géométriquement intègre, \textit{a fortiori} satisfait (\textsc{Abélienne-Scindée}),
si $P_\theta(z)\in k(\theta)[z]$ est un polynôme non nul;

- satisfait (\textsc{Abélienne-Scindée}), si $P_\theta(z)$ est un polynôme nul.
En fait, la fibre $X_\theta$ a $p$ composantes irréductibles (toute de multiplicité un)
après l'extension abélienne $l(\theta)/k(\theta),$
dont chacune est géométriquement intègre sur $l(\theta).$

En résumé, on trouve:

\begin{prop}\label{application}
Soient $k$ un corps de nombres et $l$ une extension cyclique. Soit $K$ une extension finie
de $k(t).$ Supposons que $\sha(Jac(C))$ est fini, où $C$ est la courbe projective lisse et géométriquement intègre
de corps des fonctions $K.$ Soit $X$ une variété projective, lisse, géométriquement intègre, et
$k$-rationnellement équivalente à la $k$-variété définie par l'équation (en variables $(x_1,\ldots,x_p,z)$)
$$N_{l(C)/k(C)}(x_1w_1+\cdots+x_pw_p)=P(z),$$
où $P(z)\in K[z]$ est un polynôme irréductible sur $K.$

Alors, la suite $(E)$ est exacte pour $X.$
\end{prop}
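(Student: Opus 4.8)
The plan is to verify that Proposition~\ref{application} is a direct application of the main theorem, the entire difficulty being reduced to checking hypothesis (\textsc{Ab\'elienne-Scind\'ee}) and producing the generalized Hilbertian subset on which the arithmetic hypothesis holds.

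**Setup and reduction.** Write $K=k(C)$ and consider the $k$-variety $X$ defined (up to $k$-birational equivalence, which preserves all the invariants appearing in $(E)$, namely $Br$, $CH_0$, and the Brauer--Manin pairing) by $N_{l(C)/k(C)}(x_1w_1+\cdots+x_pw_p)=P(z)$. Here $l/k$ is cyclic and $w_1,\ldots,w_p$ is a $k$-basis of $l$. First I would observe that the left-hand norm form $N_{l(C)/k(C)}$ does \emph{not} vary along $C$, so we are precisely in the "cas particulier" discussed just before the statement: $L=l(C)$ with $l/k$ cyclic. This is exactly the situation where (\textsc{Ab\'elienne-Scind\'ee}) was checked fiber by fiber in the paragraph preceding the proposition.

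**Verifying (\textsc{Ab\'elienne-Scind\'ee}) and constructing $\textsf{Hil}$.** The key step is to invoke the fiberwise analysis already carried out. For a closed point $\theta$ of $C$, the fiber $X_\theta$ is given by $N_{l\otimes_k k(\theta)/k(\theta)}(\sum x_iw_i)=P_\theta(z)$. If $P_\theta(z)$ is a nonzero polynomial, the fiber is geometrically integral, hence trivially satisfies (\textsc{Ab\'elienne-Scind\'ee}); if $P_\theta(z)=0$, the fiber decomposes after the abelian extension $l(\theta)/k(\theta)$ into $p$ multiplicity-one components each geometrically integral over $l(\theta)$, so (\textsc{Ab\'elienne-Scind\'ee}) again holds. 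Thus (\textsc{Ab\'elienne-Scind\'ee}) is verified for every codimension-one point. Next I would produce the generalized Hilbertian subset: the irreducible polynomial $P(z)\in K[z]$ defines a finite extension $K'/K$, hence a normal curve $Z$ with a finite dominant morphism $Z\to C$, \'etale over a dense open $U\subset C$; this morphism defines $\textsf{Hil}\subset C$. By the \textbf{Fait} (deduced from \cite{CTHSk} and Wei's theorem \cite{Wei}), for every $\theta\in\textsf{Hil}$ the polynomial $P_\theta(z)$ remains irreducible over $k(\theta)$, so $Br(X_\theta)=\mathrm{im}[Br(k(\theta))\to Br(X_\theta)]$, whence $X_\theta$ satisfies the Hasse principle and weak approximation for zero-cycles of degree $1$ unconditionally.

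**Conclusion via the main theorem.** With (\textsc{Ab\'elienne-Scind\'ee}) established, $\textsf{Hil}$ constructed, and the arithmetic hypothesis (2) holding over $\textsf{Hil}$, I would apply the main theorem in case (3): the generic fiber $X_\eta$ is a Ch\^atelet-type variety, hence geometrically rationally connected, and $\sha(Jac(C))$ is assumed finite. Conclusion (3) of the main theorem then yields directly the exactness of the sequence $(E)$ for $X$. The genuine mathematical content has all been pushed into the main theorem and into the \textbf{Fait}; the proof of the proposition itself is essentially a bookkeeping verification that the two hypotheses feeding the main theorem are met. The only step requiring a little care is the claim that $P_\theta(z)$ stays irreducible for $\theta\in\textsf{Hil}$, i.e.\ that the Hilbertian subset defined by $Z\to C$ does what we want; this is exactly the defining property of $\textsf{Hil}$, since $\rho^{-1}(\theta)$ connected translates into $K'\otimes_K k(\theta)$ being a field, equivalently $P_\theta$ irreducible over $k(\theta)$.
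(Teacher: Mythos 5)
Your proposal is correct and follows essentially the same route as the paper's own argument (which is laid out in the paragraphs of \S 6 preceding the proposition): the \textbf{Fait} gives the arithmetic hypothesis on the fibers over the generalized Hilbertian subset defined by the normal curve $Z\to C$ attached to the irreducible polynomial $P(z)$, the hypothesis (\textsc{Ab\'elienne-Scind\'ee}) is verified fiberwise in the same two cases ($P_\theta\neq 0$ giving a geometrically integral fiber, $P_\theta=0$ giving $p$ multiplicity-one components split by the abelian extension $l(\theta)/k(\theta)$), and case (3) of the main theorem then yields the exactness of $(E)$. If anything, you are slightly more explicit than the paper on two points it leaves implicit, namely the geometric rational connectedness of the generic fiber (needed to invoke case (3)) and the birational invariance of the exactness of $(E)$ among smooth projective models.
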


\begin{rem}
Si l'extension $L/k(C)$ ne provient pas simplement d'une extension finie $l/k,$
\textit{a priori}, on ne sait pas si, pour tout modèle $X\to C$ de
$X_\eta/k(C),$ les fibres satisfont l'hypothèse\footnote{Par contre,
toutes les fibres du solide de Poonen \cite{Poonen} sont géométriquement intègres,
dans ce cas-là, le théorème principal de \cite{Liang1} suffit à conclure. Les solides de Poonen sont aussi dans le cadre
de la proposition \ref{application}.} (\textsc{Abélienne-Scindée}).
Généralement, on part d'un $p$-fold de Châtelet $Y/k(C)$ défini par
$N_{L/k(C)}(x_1w_1+\cdots+x_pw_p)=P(z),$ et on veut
trouver un modèle $X\to C$ à fibre générique $X_\eta$ isomorphe à $Y$ sur $k(C),$
tel que les fibres fermées satisfont (\textsc{Abélienne-Scindée}).
Comme $\bar{k}(C)$ est un $C_1$-corps d'après le théorème de Tsen,
la variété $Y$ admet toujours un
$\bar{k}(C)$-point car elle est définie par un polynôme homogène,
l'homogénéisation de $N_{L/k(C)}(x_1w_1+\cdots+x_pw_p)=P(z),$
de degré $p$ en $p+1$ variables $x_0,x_1,\ldots,x_p.$
D'où on obtient une $\bar{k}$-section de $\pi_{\bar{k}}$ pour n'importe quel modèle $\pi:X\to C$ de $Y/k(C),$
alors toute fibre $X_\theta$ possède une composante irréductible de multiplicité un.
Mais on ne sait pas si $Y/k(C)$ admet un modèle
$X\to C$ vérifiant la condition d'abélianité de (\textsc{Abélienne-Scindée}).
\end{rem}

\smallskip
\small \noindent \textbf{Remerciements.}
Je tiens à remercier D. Harari pour ses nombreuses discussions très utiles
pendant la préparation de ce travail, et pour son aide pour le
français. Je remercie O. Wittenberg de sa patiente explication
de son travail récent \cite{Wittenberg} et de ses commentaires sur la première version de ce texte.
Je remercie également J.-L. Colliot-Thélène pour ses suggestions.

\normalsize


\bibliographystyle{plain}
\bibliography{astuce}

\begin{thebibliography}{10}

\bibitem{CT95}
J.-L. Colliot-Th\'el\`ene.
\newblock L'arithm\'etique du groupe de {C}how des z\'ero-cycles.
\newblock {\em J. Th\'eorie de nombres de Bordeaux}, 7:51--73, 1995.

\bibitem{CT99}
J.-L. Colliot-Th\'el\`ene.
\newblock Principe local-global pour les z\'ero-cycles sur les surfaces
  r\'egl\'ees (avec un appendice par {E}. {F}rossard et {V}. {S}uresh).
\newblock {\em J. Amer. Math. Soc.}, 13:101--124, 2000.

\bibitem{CTHSk}
J.-L. Colliot-Th\'el\`ene, D.~Harari, and A.-N. Skorobogatov.
\newblock Valeurs d'un polyn{\^o}me {\`a} une variable repr{\'e}sent{\'e}es par
  une norme.
\newblock In M.~Reid and A.-N. Skorobogatov, editors, {\em Number theory and
  algebraic geometry}, volume 303 of {\em London Math. Society Lecture Notes
  Series}, pages 69--89--73. Cambridge University Press, 2003.

\bibitem{CT-I}
J.-L. Colliot-Th\'el\`ene and F.~Ischebeck.
\newblock L'\'equivalence rationnelle sur les cycles de dimension z\'ero des
  vari\'et\'es alg\'ebriques r\'eelles.
\newblock {\em C. R. Acad. Sci. Paris S\'er. I Math.}, 292:723--725, 1981.

\bibitem{CTSansuc81}
J.-L. Colliot-Th\'el\`ene and J.-J. Sansuc.
\newblock On the {C}how groups of certain rational surfaces : a sequel to a
  paper of {S}.{B}loch.
\newblock {\em Duke Math. J.}, 48:421--447, 1981.

\bibitem{CT-Sk2000}
J.-L. Colliot-Th\'el\`ene and A.-N. Skorobogatov.
\newblock Descent on fibrations over $\mathbb{P}^1_k$ revisited.
\newblock {\em Math. Proc. Cambridge Philos. Soc.}, 128(3):383--393, 2000.

\bibitem{CT-Sk-SD}
J.-L. Colliot-Th\'el\`ene, A.-N. Skorobogatov, and Sir~Peter Swinnerton-Dyer.
\newblock Rational points and zero-cycles on fibred varieties : {S}chinzel's
  hypothesis and {S}alberger's device.
\newblock {\em J. reine angew. Math.}, 495:1--28, 1998.

\bibitem{CT-SD}
J.-L. Colliot-Th\'el\`ene and Sir~Peter Swinnerton-Dyer.
\newblock Hasse principle and weak approximation for pencils of
  {S}everi-{B}rauer and similar varieties.
\newblock {\em J. reine angew. Math.}, 453:49--112, 1994.

\bibitem{Ekedahl}
T.~Ekedahl.
\newblock An effective version of {H}ilbert's irreducibility theorem.
\newblock In C.~Goldstein, editor, {\em S\'eminaire de th\'eorie des nombres de
  Paris 1988-1989}, volume~91 of {\em Progress in Math.}, pages 241--248.
  Birkh{\"a}user, 1990.

\bibitem{Frossard}
E.~Frossard.
\newblock Obstruction de {B}rauer-{M}anin pour les z\'ero-cycles sur des
  fibrations en vari\'et\'es de {S}everi-{B}rauer.
\newblock {\em J. reine angew. Math.}, 557:81--101, 2003.

\bibitem{EGAIV}
A.~Grothendieck.
\newblock {\em EGA IV. \'Etude locale des sch\'emas et des morphismes de
  sch\'emas}.
\newblock I.H.E.S. Publ. Math., 1964.

\bibitem{Harari}
D.~Harari.
\newblock M\'ethode des fibrations et obstruction de {M}anin.
\newblock {\em J. Duke Math.}, 75:221--260, 1994.

\bibitem{Harari3}
D.~Harari.
\newblock Sp\'ecialisation des conditions de {M}anin pour les vari\'et\'es
  fibr\'ees au-dessus de l'espace projectif.
\newblock {\em Compositio Math.}, 143:603--617, 2007.

\bibitem{KatoSaito86}
K.~Kato and S.~Saito.
\newblock Global class field theory of arithmetic schemes.
\newblock {\em Contemporary Math.}, 55:255--331, 1986.

\bibitem{Liang4}
Y.~Liang.
\newblock Arithmetic of 0-cycles on varieties defined over number fields.
\newblock Preprint (submitted), available at arXiv:1107.1634.

\bibitem{Liang2}
Y.~Liang.
\newblock Principe local-global pour les z\'ero-cycles sur certaines fibrations
  au-dessus de l'espace projectif.
\newblock Pr\'epublication, arXiv:1011.5995.

\bibitem{Liang1}
Y.~Liang.
\newblock Principe local-global pour les z\'ero-cycles sur certaines fibrations
  au-dessus d'une courbe: {I}.
\newblock \`A para\^itre dans Mathematische Annalen, disponible sur
  arXiv:1006.2572.

\bibitem{Manin}
Yu.I. Manin.
\newblock Le groupe de {B}rauer-{G}rothendieck en g\'eom\'etrie diophantienne.
\newblock In {\em Actes du Congr\`es Intern. Math. (Nice 1970)}, volume~1,
  pages 401--411. Gauthiers-Villars, 1971.

\bibitem{Margaux}
B.~Margaux.
\newblock Passage to the limit in non-abelian \v{C}ech cohomology.
\newblock {\em J. Lie Theory}, 17:591--596, 2007.

\bibitem{Poonen}
B.~Poonen.
\newblock Insufficiency of the {B}rauer-{M}anin obstruction applied to \'etale
  covers.
\newblock {\em Annals of Math.}, 171(3):2157--2169, 2010.

\bibitem{Salberger}
P.~Salberger.
\newblock Zero-cycles on rational surfaces over number fields.
\newblock {\em Invent. math.}, 91:505--524, 1988.

\bibitem{Sansuc82}
J.-J. Sansuc.
\newblock Descente et principe de {H}asse pour certaines vari\'et\'es
  rationnelles.
\newblock In M.-J. Bertin, editor, {\em S\'eminaire de th\'eorie des nombres de
  Paris 1980-1981}, volume~22 of {\em Progress in Math.}, pages 253--271.
  Birkh{\"a}user, 1982.

\bibitem{Szamuely}
T.~Szamuely.
\newblock {\em Galois groups and fundamental groups}, volume 117 of {\em
  Cambridge Studies in Advanced Mathematics}.
\newblock Cambridge University Press, 2009.

\bibitem{vanHamel}
J.~van Hamel.
\newblock The {B}rauer-{M}anin obstruction for zero-cycles on {S}everi-{B}rauer
  fibrations over curves.
\newblock {\em J. London Math. Soc.}, 68:317--337, 2003.

\bibitem{anthony-bianca}
A.~V\'arilly-Alvarado and B.~Viray.
\newblock Higher dimensional analogues of {C}h\^atelet surfaces.
\newblock Pr\'epublication, arXiv:1101.5453.

\bibitem{Wei}
D.~Wei.
\newblock On the equation ${N}_{K/k}({\Xi})={P}(t)$.
\newblock Travail en progr\`es.

\bibitem{Wittenberg}
O.~Wittenberg.
\newblock Z\'ero-cycles sur les fibrations au-dessus d'une courbe de genre
  quelconque.
\newblock Pr\'epublication, arXiv:1010.1883.

\bibitem{WittenbergLNM}
O.~Wittenberg.
\newblock {\em Intersections de deux quadriques et pinceaux de courbes de genre
  1}, volume 1901 of {\em Lecture Notes in Mathematics}.
\newblock Springer, 2007.

\end{thebibliography}
\end{document}